\theoremstyle:=definition,remark,plain\do{%
        \expandafter\g@addto@macro\csname th@\theoremstyle\endcsname{%
            \addtolength\thm@preskip\parskip
            }%
        }
\newtheorem{theorem}{Theorem}
\newtheorem{proposition}[theorem]{Proposition}
\newtheorem{lemma}[theorem]{Lemma}
\newtheorem{remark}[theorem]{Remark}
\newtheorem{definition}[theorem]{Definition}
\newtheorem{example}[theorem]{Example}
\newcommand{\E}{\mathbb{E}}
\newcommand{\N}{\mathbb{N}}
\newcommand{\R}{\mathbb{R}}
\newcommand{\cN}{\mathcal{N}}
\newcommand{\cP}{\mathcal{P}}
\newcommand{\ed}{\stackrel{(d)}{=}}
\renewcommand{\P}{\mathbb{P}}
\newcommand{\1}{\mathbb{1}}
\newcommand{\abs}[1]{\lvert #1 \rvert}
\renewcommand{\rho}{\varrho}
\renewcommand{\epsilon}{\varepsilon}
\renewcommand{\phi}{\varphi}
\begin{document}

\title{Trees within trees: Simple nested coalescents}

\maketitle

\begin{center}

 {\normalsize Airam Blancas$^{1}$, Jean-Jil Duchamps$^{2,3}$, Amaury Lambert$^{2,3}$, Arno Siri-J\'egousse$^{4}$}\\
\noindent {\small \it
$^{1}$ Institut für Mathematik, Goethe-Universität, Frankfurt, Germany;\\
$^{2}$ Laboratoire de Probabilit\'es, Statistique et Mod\'elisation (LPSM), Sorbonne Universit\'e, CNRS UMR 8001, Paris, France; \\
$^{3}$ Center for Interdisciplinary Research in Biology (CIRB), Coll\`ege de France, CNRS UMR 7241, INSERM U1050, PSL Research University, Paris, France; \\
$^{4}$ Instituto de Investigaciones en Matemáticas Aplicadas y Sistemas (IIMAS), Universidad Nacional Aut\'onoma de M\'exico (UNAM), Mexico City, Mexico.
}

\end{center}
\medskip

\begin{abstract}{ 
We consider the compact space of pairs of nested partitions of $\N$, where by analogy with models used in molecular evolution, we call ``gene partition'' the finer partition and ``species partition'' the coarser one.
We introduce the class of nondecreasing processes valued in nested partitions, assumed Markovian and with exchangeable semigroup. These processes are said simple when each partition only undergoes one coalescence event at a time (but possibly the same time). Simple nested exchangeable coalescent (SNEC) processes can be seen as the extension of $\Lambda$-coalescents to nested partitions. We characterize the law of SNEC processes as follows. In the absence of gene coalescences, species blocks undergo $\Lambda$-coalescent type events and in the absence of species coalescences, gene blocks lying in the same species block undergo i.i.d. $\Lambda$-coalescents. Simultaneous coalescence of the gene and species partitions are governed by an intensity measure $\nu_s$ on $(0,1]\times {\mathcal M}_1 ([0,1])$ providing the frequency of species merging and the law in which are drawn (independently) the frequencies of genes merging in each coalescing species block.  
As an application, we also study the conditions under which a SNEC process comes down from infinity.}
\end{abstract}

\paragraph{Keywords and phrases:} Lambda-coalescent; exchangeable; partition; coming down from infinity; random tree; gene tree; population genetics; species tree; phylogenetics; evolution.

\paragraph{MSC 2000 Classification.} 60G09, 60G57, 60J35, 60J75, 92D10, 92D15.

\vspace{0.5cm}


\section{Introduction} \label{sec:intro}
In the framework of population biology, one can see asexual organisms, but also DNA sequences or even species, as replicating particles. The genealogical ascendance of co-existing replicating particles can always be represented by a tree whose tips are labelled by the names of these particles \cite{Lam08, Lam17, SS03}. Even if species are not strictly speaking replicating particles, ancestral relationships between species are also usually represented by a tree whose nodes are interpreted as \emph{speciation} events, i.e., the emergence of two or more species from one single species. 
The inference of the so-called \emph{gene tree} of contemporary DNA sequences from their comparison has a decade-long history. It is considered as a field in its own right, called \emph{molecular phylogenetics} \cite{F04, NK00}, which relies heavily on the theory of Markov processes. (This can be misleading, but the \emph{species tree}, much more often than the gene tree, is called a \emph{phylogeny}.)

When one type of replicating particle is physically embedded in another type of particle, like a virus in its host, their common history can be depicted as a \emph{tree within a tree} \cite{D97, M97, PC98}: tree of dividing parasites inside the tree of dividing hosts, tree of paralogous genes (i.e.\ distinct DNA segments resulting from gene duplication and coding for similar functions) inside the gene family tree, gene tree inside the species tree.
In many such cases, biologists are more interested in the coarser tree rather than in the finer tree.
Typically, the finer tree is a gene tree and is inferred thanks to methods developed in molecular phylogenetics. One of the current methodological challenges in quantitative biology is to devise fast statistical algorithms able to also infer the coarser tree. When the genes are sampled from infecting pathogens of the same species (Influenza, HIV...), the coarser tree is the epidemic transmission process \cite{GPG04, VKB13}. When the genes are sampled from (any kind of) different species, the coarser tree is the \emph{species tree} \cite{PC97, HD09, STD14}. It is often required to use several gene trees nested in the same species tree to infer the latter.

In terms of stochastic modeling, the standard strategy is to  define the two nested trees in a hierarchical model referred to as the \emph{multispecies coalescent model} \cite{R02, DR09} (see also \cite{Eth2011,Ber2009} for recent surveys on general coalescent theory and applications to population genetics).
First, the species tree is fixed or drawn from some classic probability distribution (e.g., pure-birth process stopped at some fixed time, viewed as present time).
Second, each gene sequence is assigned to the contemporary species it is (supposed to be) sampled from. Recall that each contemporary species is in correspondence with a tip of the species tree.
Third, conditional on the species tree, each gene lineage can then be traced backwards in time inside the species tree, starting from the tip species harboring it and traveling through its ancestral species successively.
In addition, gene lineages are assumed to coalesce according to the \emph{censored Kingman coalescent} \cite{K82}, i.e., each pair of lineages \emph{lying in the same species} independently coalesces at constant rate. 

In the case when the species tree is also distributed as a Kingman coalescent, the former two-type coalescent process is a Markov process as time runs backward, that we call the \emph{nested Kingman coalescent} (or `Kingman-in-Kingman') \cite{LS,airam_arno,dawsonMultilevel2018}.
Our goal here is to display a much richer class of Markov models for trees within trees, called \emph{simple nested exchangeable coalescent} (SNEC) processes, where multiple species lineages can merge into one single species lineage, and where simultaneously, within those merging species, multiple gene lineages can merge into one single gene lineage.
To make this more precise, we show in the next display some valid and invalid coalescence events from an initial state where six genes, labeled from 1 to 6, are grouped by pairs in three species lineages.
We represent this situation in the next display by a pair of partitions $\binom{\pi^{s}}{\pi^{g}}$, as in the left-hand side of the display.
Event \eqref{transok} is valid because the first two species merge and simultaneously, \emph{within} these species, genes labeled 1, 2 and 3 coalesce.
On the contrary, event \eqref{transnotok1} is not a valid transition because there are two distinct gene coalescences (1 with 2, and 3 with 4), which is proscribed, and event \eqref{transnotok2} is not valid because the gene coalescence (5 with 6) is outside the species coalescence.

\begin{align}
\Bigg( \!\!\! \mbox{
  \begin{tabular}{l@{~}l@{~}l@{~}l@{~}l@{~}l}
  \{ 1, & 2 \}\{ & 3, &4 \}\{ & 5, & 6 \} \\
  \{ 1 \}\{& 2 \}\{& 3 \}\{& 4 \}\{& 5 \}\{& 6 \}
  \end{tabular} 
} \!\!\! \Bigg)
&\rightarrow 
\Bigg( \!\!\! \mbox{
  \begin{tabular}{l@{~}l@{~}l@{~}l@{~}l@{~}l}
  \{ 1, & 2, & 3, & 4 \}\{ & 5, & 6 \} \\
  \{ 1,\phantom{\}\{}& 2,\phantom{\}\{}& 3 \}\{& 4 \}\{& 5 \}\{& 6 \}
  \end{tabular}
} \!\!\! \Bigg) \tag{A}\label{transok} \\
&\not\rightarrow \Bigg( \!\!\! \mbox{
  \begin{tabular}{l@{~}l@{~}l@{~}l@{~}l@{~}l}
  \{ 1, & 2, & 3, & 4 \}\{ & 5, & 6 \} \\
  \{ 1,\phantom{\}\{}& 2 \}\{& 3,\phantom{\}\{}& 4 \}\{& 5 \}\{& 6 \}
  \end{tabular} 
} \!\!\! \Bigg) \tag{B}\label{transnotok1} \\
&\not\rightarrow \Bigg( \!\!\! \mbox{
  \begin{tabular}{l@{~}l@{~}l@{~}l@{~}l@{~}l}
  \{ 1, & 2, & 3, & 4 \}\{ & 5, & 6 \} \\
  \{ 1 \}\{& 2 \}\{& 3 \}\{& 4 \}\{& 5,\phantom{\}\{}& 6 \}
  \end{tabular} 
} \!\!\! \Bigg) \tag{C}\label{transnotok2}
\end{align}

In brief, SNEC processes are the generalization of \emph{$\Lambda$-coalescents} to processes valued, not in partitions of $\N$, but in pairs of nested partitions of $\N$.
The class of $\Lambda$-coalescents \cite{sagitovGeneral1999, pitmanCoalescents1999}, for which only one coalescence event can occur at a time, is a subclass of Markov, exchangeable processes with possibly non-binary nodes, called \emph{$\Xi$-coalescents}, where several coalescence events can be simultaneous \cite{bertoinRandom2006, schweinsbergCoalescents2000}. 

Non-binary nodes in species trees can be interpreted as \emph{unresolved nodes} (a sequence of binary nodes following each other too closely in time for their order to be inferred correctly) or \emph{radiation} events (periods of frequent speciations due to the opening of new ecological opportunities that can be exploited by different, new species). 
In gene trees, non-binary nodes are increasingly recognized as a conspicuous sign of natural selection both by biologists \cite{LT14, MHA17} and by mathematicians and physicists \cite{BBS2013,DS05, BD13, NH13, DWF13, S17} ; it is also well understood that non-binary nodes could be consequences of bottlenecks as well as large variance in offspring distributions \cite{EW2006,Sch2003}.
The class of SNEC processes includes all these features.
They can distinguish unresolved nodes (sequence of stochastically close, binary coalescences) from radiations (multiple merger in the species tree).
Under the interpretation of non-binary nodes as a result of natural selection, SNEC processes can model the appearance of alleles responsible for positive selection (multiple merger in the gene tree) or for divergent adaptation (multiple merger simultaneously in the gene tree and in the species tree). 

From a mathematical point of view as well, SNEC processes open up the door to many possible new investigations. For example some of us are currently studying the speed of coming down from infinity of SNEC processes \cite{LS,airam_arno} as well as similar extensions \cite{D} to fragmentation processes \cite{bertoinRandom2006}. It will be interesting to investigate how the nested trees generated by SNEC processes can be cast in the frameworks of multilevel measure-valued processes \cite{airam,dawsonMultilevel2018} and flows of bridges \cite{BLG03, BLG06} as well as of exchangeable combs \cite{FRLS,  L18}.
It would also be natural to study the extension of $\Xi$-coalescents 
to nested partitions. 

\paragraph{Organization of the article.}
In Section \ref{sec:statement}, we introduce some notation, and give examples of nested coalescent processes whose distributions are characterized by four parameters.
Section \ref{sec:snec} formally defines our object of study, the SNEC processes.
We prove our main result in Section \ref{sec:proof}, and show in Section \ref{sec:poisson} how SNEC processes can be constructed from a collection of Poisson point processes.
Finally, Section \ref{sec:CDI} gives a necessary and sufficient condition under which SNEC processes come down from infinity.

\section{Statement of results and notation}
\label{sec:statement}

\subsection{Statement of results and examples}

An exchangeable partition is a random partition of $\N$ whose law is invariant by permutations of $\N$ (with finite support). A $\Lambda$-coalescent is a Markov process valued in the exchangeable partitions of $\N$ typically starting from the partition $\mathbf{0}_\infty$ of $\N$ into singletons, and such that only one coalescence event can occur at a time. The generator of a $\Lambda$-coalescent $\mathcal R=(\mathcal R (t), t\ge 0)$ is characterized by a $\sigma$-finite measure $\nu$ on $(0,1]$ called the coagulation measure and a non-negative real number $a$ called the Kingman coefficient. Then $\mathcal R$ can be constructed from a Poisson point process as follows.

For $x\in (0,1]$, let $P_x$ denote the law of a sequence of i.i.d. Bernoulli$(x)$ r.v.'s and define 
\[
P := \int_{(0,1]}\nu(dx) P_x
\]
Also define ${\tt K}_{i,i'}$ the (Dirac) law of the sequence with only zero entries except a 1 at positions $i$ and $i'$ and set
\[
{\tt K}:=\sum_{1\le i<i'} {\tt K}_{i,i'}
\]
Finally, let $M$ be a Poisson point process with intensity measure $dt\otimes (P+a {\tt K})$. 
Roughly speaking, at each atom $(t, (X_i,i\ge 1))$ of $M$, $\mathcal R(t)$ is obtained from $\mathcal R(t-)$ by merging exactly the $i$-th block of $\mathcal R(t-)$ together, for all $i$ such that $X_i=1$. The rigorous description is given through restrictions of $\mathcal R$ to $[n]:=\{1,\ldots, n\}$ and by applying Kolmogorov extension theorem. See  \cite{bertoinRandom2006} for details. Note that for this description to apply (i.e., for restrictions of $\mathcal R$ to $[n]$ to have positive holding times), one needs the coagulation measure to satisfy
\begin{equation}
\label{eqn:condLambda}
\int_{(0,1]}x^{2}\,\nu(dx) <\infty.
\end{equation}
The finite measure $x^{2}\,\nu(dx)$ is usually denoted $\Lambda(dx)$, hence the name $\Lambda$-coalescent. 
\medskip

We can now draw the parallel with the results obtained in this paper. We want to define a Markov process $\mathcal{R}=\left( (\mathcal{R}^s(t), \mathcal{R}^g(t)),\; t\geq 0 \right)$ valued in exchangeable bivariate, nested partitions of $\N$, in the sense that the \emph{gene partition} $\mathcal{R}^g(t)$ is finer than the \emph{species partition} $\mathcal{R}^s(t)$ for all $t$ a.s.

We now have to allow for coalescences in both the gene partition and the species partition. To this aim, we will consider a doubly indexed array of 0's and 1's $\mathbf{Z}={(\mathbf{X}, (\mathbf{Y}_i, \, i\geq 1))}=(X_i, Y_{ij}, \, i,j\geq 1)$.
The goal is to give a characterization and a Poissonian construction of $\mathcal R$ under the assumptions that the semigroup of $\mathcal R$ is exchangeable and that both $\mathcal{R}^s$ and $\mathcal{R}^g$ undergo only one coalescence at a time (but possibly the same time), as detailed in forthcoming Definition \ref{snec}. Roughly speaking, and similarly as previously, $X_i$ will determine whether the $i$-th species block participates in the coalescence in the species partition $\mathcal{R}^s$, and $Y_{ij}$ whether the $j$-th gene block of the $i$-th species block participates in the coalescence in the gene partition $\mathcal{R}^g$.

Let us start with the Kingman-type coalescences.
Let ${\tt K}^s_{i,i'}$ be the (Dirac) law of the array $\mathbf{Z}$ with only zero entries except $X_i = X_{i'}=1$ and let ${\tt K}^g_{i;j,j'}$ be the (Dirac) law of the array $\mathbf{Z}$ with only zero entries except $X_i =Y_{ij}= Y_{ij'}=1$. Finally, define 
\[
{\tt K}^s=\sum_{1\le i<i'}{\tt K}^s_{i,i'}\quad \mbox{ and } \quad {\tt K}^g= \sum_{1\le i}\sum_{1\le j<j'}{\tt K}^g_{i;j,j'}
\]
Let us carry on with multiple gene mergers without simultaneous species coalescences.
Let $x\in (0,1]$ and $i\in \N$. Let $P^g_{i, x}$ be the distribution of the array $\mathbf{Z}$ with only zero entries except at row $i$, where $X_i=1$ and the $(Y_{ij}, \,j\ge 1)$ are i.i.d.\ Bernoulli$(x)$ r.v.'s. Let us define 
\[
P^g_x := \sum_{i\ge 1}P^g_{i, x} 
\]
Finally, let us consider multiple species mergers, with possible simultaneous gene mergers.
Let $x\in (0,1]$ and $\mu\in \mathcal{M}_1 ([0,1])$. Let $(X_i,\,i\geq 1)$ be a sequence of i.i.d.\ Bernoulli$(x)$ r.v.'s and let $(Q_i,\,i\geq 1)$ be an independent sequence of i.i.d. r.v.'s of $[0,1]$ with distribution $\mu$. Then for each $i\ge 1$, conditional on $X_i$ and $Q_i$, let $(Y_{ij}, \, j\geq 1)$ be an independent sequence of i.i.d.\  Bernoulli$(Q_i)$ r.v.'s. if $X_i=1$ and the null array otherwise.
Let us write $P^s_{x,\mu}$ for the distribution of the array $\mathbf{Z}$ thus defined.

Our main result is that for any simple nested exchangeable coalescent (SNEC) process $\mathcal R$,
there are
\begin{itemize}
  \item two non-negative real numbers $a_s$ and $a_g$;
  \item a $\sigma$-finite measure $\nu_g$ on $(0,1]$;
  \item a $\sigma$-finite measure $\nu_s$ on $(0,1]\times\mathcal M_1([0,1])$,
\end{itemize}
such that $\mathcal{R}$ can be constructed from a Poisson point process  $M$ with intensity $dt \otimes \nu(d\mathbf{Z})$ where
\[
\nu :=  a_s{\tt K}_s + a_g {\tt K}_g+ \int_{(0,1]}\nu_g (dx)\, P^g_{x} +\int_{(0,1]\times \mathcal{M}_1 ([0,1])}\nu_s(dx,d\mu)\, P^s_{x,\mu}.
\]

Similarly as explained previously, at each atom $(t, \mathbf{Z})$ of $M$, the double array $\mathbf{Z}$ prescribes which blocks have to merge at time $t$. For the finite restrictions of $\mathcal R$ to have positive holding times, the measures $\nu_s$ and $\nu_g$ are required to satisfy the forthcoming conditions \eqref{eq:condspecies} and \eqref{eq:condgenes}  respectively, which are the analogs to \eqref{eqn:condLambda}.

Note that coagulations of the Kingman type cannot occur simultaneously in the species partition and in the gene partition.

We now give a couple of examples of SNEC processes.

If $\nu_s (dp, d\mu) = \nu_s'(dp)\, \delta_{\delta_0}(d\mu)$, species and genes never coalesce simultaneously and the nested coalescent is a multispecies coalescent (see Introduction), where the species tree is given by the $\Lambda$-coalescent with coagulation measure $\nu_s'$ and Kingman coefficient $a_s$, while the genes in the same species block undergo independent $\Lambda$-coalescents with coagulation measure $\nu_g$ and Kingman coefficient $a_g$. In particular, when $\nu_s'$ and $\nu_g$ are zero, the SNEC process is a nested Kingman coalescent (Kingman-in-Kingman). 

Whenever $\nu_s$ is not under the form $\nu_s (dp, d\mu) = \nu_s'(dp)\, \delta_{\delta_0}(d\mu)$, species blocks and gene blocks can coalesce simultaneously.
For example if $\nu_s (dp, d\mu) = \nu_s'(dp)\, \delta_{\delta_x}(d\mu)$ for $x\in (0,1]$, at each species coalescence event, a proportion $x$ of gene blocks contained in the species blocks participating in the coalescence event, are simultaneously merged together. 
In particular, if $x=1$, the gene tree coincides with the species tree on lineages situated after a species coalescence event. 
Recall that there are conditions (see \eqref{eq:condspecies}) for $\nu_s$ to be a correct SNEC measure, which in this case translate to
\begin{gather*}
\int_{(0,1]}\nu_s' (dp)\,p^2<\infty \quad \text{ and } \quad \int_{(0,1]}\nu_s'(dp)\,p \,x^2 <\infty,
\end{gather*}
which is simply equivalent to
\[
\int_{(0,1]}\nu_s'(dp)\,p < \infty.
\]
Otherwise the simplest sort of measure $\nu_s$ can be obtained by parameterizing its second component $\mu$, for example if $\mu$ is a Beta distribution $\mu_{a,b}(dq) = c_{a,b}\,q^{a-1}(1-q)^{b-1}\, dq$, where $a, b>0$ and $c_{a,b} = \frac{\Gamma(a+b)}{\Gamma(a)\,\Gamma(b)}$, we can consider $\nu_s$ under the form
\[
\nu_s(dp,d\mu) = \nu_s' (dp, da, db)\, \delta_{\mu_{a,b}}(d\mu).
\]
In this case, the condition \eqref{eq:condspecies1} reads
\begin{equation*}
\int_{(0,1]\times (0,\infty)\times(0,\infty)}\nu_s' (dp, da, db)\,p^2<\infty,
\end{equation*}
and \eqref{eq:condspecies2} becomes
\begin{equation*}
\int_{(0,1]\times (0,\infty)\times(0,\infty)}\nu_s'(dp, da, db)\,p \int_{[0,1]}c_{a,b}\,q^{a+1}(1-q)^{b-1}\, dq<\infty,
\end{equation*}
which can be rewritten
\[
\int_{(0,1]\times (0,\infty)\times(0,\infty)}\nu_s' (dp, da, db)\,\frac{pa(a+1)}{(a+b)(a+b+1)}<\infty.
\]
Note that the idea to use a Beta distribution here is inspired by the $\Lambda$-coalescent setting \cite{pitmanCoalescents1999}, where Beta distributions appear as natural candidates for the parametrization of the measure $\Lambda$, as the coalescence rate of each $k$-tuple of blocks among a total number of $b$ blocks is expressed in the form
\[
\int_{0}^{1}x^{k-2}(1-x)^{b-k}\Lambda(dx).
\]

\subsection{Notation}

For any $n\in\bar{\mathbb{N}}:=\N\cup\{+\infty\}$, let $\cP_n$ be the set of partitions of $[n]$ .
A partition $\pi$ is called \emph{simple} if at most one of its non-empty blocks is not a singleton. 
We denote the set of simple partitions of $[n]$ by $\mathcal{P}_n^\prime$, that is,
\[
\mathcal{P}_n^\prime= \{ \pi\in\mathcal{P}_n ,\; \text{Card}\{ i,\; | \pi_i | >1\} \leq 1 \}
\]
where $\pi_1,\pi_2,\dots$ denote the blocks of $\pi$ ordered by their least element
and $|\pi_i|$ stands for the number of elements in the block $\pi_i$.
Recall that a partition $\pi$ can be viewed as an equivalence relation, in the sense that $i \overset{ \pi}{\thicksim}j$ if and only if $i$ and $j$ belong to the same block of the partition $\pi$. 
If $\pi^g$ and $\pi^s$ belong to $\cP_n$, we will say that the bivariate partition $\pi=(\pi^s,\pi^g)$ is \emph{nested}  (or equivalently that $\pi^g$ is finer than $\pi^s$) when 
\[
i \overset{ \pi^g}{\thicksim}j\ \Longrightarrow\ i \overset{ \pi^s}{\thicksim}j.
\] 
Note that this is defines a natural partial order on $\mathcal{P}_n$, and we can write $\pi^{g}\preceq \pi^{s}$ if $(\pi^{g},\pi^{s})$ is nested.
The set of nested partitions of $[n]$ is denoted in the sequel by $\cN_n$.
We will sometimes use the notation $\mathbf{1}_n:=\{[n]\}$ for the coarsest partition of $[n]$, and $\mathbf{0}_n := \{\{1\}, \{2\},\ldots\}$ for the finest partition of $[n]$.
\begin{example}\label{ex1}
  An example of nested partition of $\{1,2,\ldots,10\}$ is given by
  \begin{align*}
  \pi^s & = \big\{\{  1,5,7 \} , \;\{ 2,4,8,10 \} ,\; \{ 3,6,9 \}\big\}\\
  \pi^g & = \big \{ \{ 1 \} , \;\{ 2,4 \} , \; \{ 3\} , \; \{ 5,7\}, \; \{ 6,9 \} ,  \;\{ 8 \}, \; \{ 10 \} \big \}.
  \end{align*}
\end{example}
The notation $(\pi^s,\pi^g)$ owes to our modeling inspiration (see Introduction) where gene lineages are enclosed into species lineages. 

Notation related to and properties of $\mathcal{P}_n$ can naturally be extended to the framework of bivariate partitions. For the sake of completeness we specify here the ones we will use repeatedly. 
The number of non-empty blocks of a bivariate partition $\pi=(\pi_1,\pi_2) \in \cP_{n_1}\times\cP_{n_2}$ is merely $|\pi|:=(|\pi_1|,|\pi_2|)$. 
If $m_1<n_1$ and $m_2<n_2$, we write $\pi_{| m_1\times m_2}$ for the restriction of $\pi$ to $\cP_{m_1}\times\cP_{m_2}$, that is, $\pi_{ | m_1\times m_2}=(\pi_{1\,|m_1},\pi_{2\,|m_2}) $. If $m\leq\min(n_1,n_2)$, we will write $\pi_{|m}:=\pi_{|m\times m}$ for its restriction to $\mathcal{P}^2_{m}:=\cP_m\times\cP_m$. 
A sequence $\pi^{ (1) } ,\pi^{ (2) },... $ of elements of $\mathcal{P}_1^2, \mathcal{P}_2^2,... $ is called \emph{consistent} if for all integers $k' \leq k$, $\pi^{ (k') }$ coincides with the restriction of $\pi^{(k)}$ to $[k']^2$. Moreover, a sequence of partitions $(\pi^{(n)}:n \in\mathbb{N})$ is consistent if and only if there exists $\pi\in\mathcal{P}^2_\infty$ such that $\pi_{ |{ n } }=\pi^{(n)}$ for every $n\in\mathbb{N}$.


Given a nested partition we can use the coagulation operator Coag (more details in Chapter 3 in Bertoin \cite{bertoinRandom2006}) to write the species partition in terms of the labels of the gene partition.
Recall that if $\pi\in\cP_n$ and $\tilde \pi\in\cP_m$ with $m\geq|\pi|$, then define $\pi^\prime=\text{Coag}(\pi, \tilde\pi)$  as the partition of $\cP_n$ such that
\[
\pi^{\prime }_j = \bigcup_{ i \in \tilde{\pi}_j } \pi_i.
\]
For every $n \in \bar{\mathbb{N}}$, let $\pi=(\pi^s, \pi^g)$ be an element of $\mathcal{N}_n$ and write $m= | \pi ^g |$. The unique partition $\bar{\pi}\in\mathcal{P}_{m}$ such that $\pi^s=\text{Coag}(\pi^g,\bar{\pi})$ is called the \emph{link} partition of $\pi$. We sometimes say that $\pi$ is linked by $\bar{\pi}$.
To illustrate the previous definition, observe that the nested partition defined in Example \ref{ex1} has link partition $ \bar{\pi}= \{\{ 1,4\}, \{ 2,6,7\}, \{ 3,5 \}\}$.

We can next get a partition of $ \mathcal{P}_{n_1}\times\mathcal{P}_{n_2}$ through the coagulation of two pairs of  partitions. More precisely, if $(\pi^1,\tilde{\pi}^1 )\in \mathcal{P}_{n_1}\times\mathcal{P}_{n_1^\prime}$ and $(\pi^2,\tilde{\pi}^2 ) \in \mathcal{P}_{n_2}\times\mathcal{P}_{n_2^\prime}$ with $n_1^\prime\geq|\pi^1|$ and $n_2^\prime\geq|\pi^2|$, then $ ( \text{Coag}( \pi^1 , \tilde{\pi}^1 ) , \text{Coag}( \pi^2, \tilde{\pi}^2 ) )$ is well defined and it is an element of $\mathcal P_{n_1}\times \mathcal P_{n_2}$.
{If we denote $\pi=(\pi^1,\pi^2)$ and $\tilde{\pi}=(  \tilde{\pi}^1, \tilde{\pi}^2 )$
  we will say that the pair $(\pi,\tilde\pi)$ is \emph{admissible} and denote}
the latter operation by $\text{Coag}_2( \pi,\tilde\pi ) $. In the following we will sometimes call the partition $\tilde\pi$ as the \emph{recipe} partition.

In the sequel, we are interested in the coagulation of a nested partition, say $\pi=(\pi^s,\pi^g)$, with a pair of simple partitions $\tilde{\pi}=( \tilde{\pi}^s, \tilde{\pi}^g ) $. Nevertheless, we should observe that the resulting partition, $\text{Coag}_2(\pi,\tilde\pi)$ is not necessarily nested. For instance, if we coagulate the partition $\pi$ of Example \ref{ex1}, with  $\tilde{\pi}^{ s}=\{ \{ 1,2 \}, \{3  \}\}$, and $\tilde{\pi}^{ g}=\{\{ 1,3 \}, \{2 \}, \{ 4\}, \{ 5\}, \{ 6\}, \{ 7 \}\}$ then $\text{Coag}(\pi^g,\tilde\pi^g)$ is not nested in $ \text{Coag}(\pi^s,\tilde\pi^s)$.
In order to maintain the nested property while coagulating a nested partition we need to watch out the way the gene blocks do merge together and if they respect the species structure.
To this end, for any $n\in \bar{\mathbb{N}}$ and $\pi\in \mathcal{N}_n$, we can define the set $\widetilde{\mathcal{P}}(\pi) \subset (\mathcal{P}'_n)^2$ of simple recipe partitions permitting a consistent merger of species and genes, i.e.
\[
\widetilde{\mathcal{P}}(\pi)=\Big\{\tilde{\pi}=(\tilde{\pi}^s,\tilde{\pi}^g)\in(\mathcal{P}_n')^2, \; i \overset{\widetilde{\pi}^g}{\sim} j \implies i\overset{\bar\pi}{\sim}j, \text{ or } k\overset{\widetilde{\pi}^s}{\sim}l, \text{ where } \pi^{g}_i \subset \pi^{s}_k \text{ and }\pi^{g}_l \subset \pi^{s}_l\Big\},
\]
where $\bar\pi$ denotes as usual the link partition of $\pi$.
Simply put, $\widetilde{\mathcal{P}}(\pi)$ is the subset of $(\mathcal{P}_n')^2$ such that
\[
\tilde{\pi}\in\widetilde{\mathcal{P}}(\pi) \iff \text{Coag}_2(\pi, \tilde{\pi})\in \mathcal{N}_n.
\]
Finally the natural partial order on partitions can be extended to bivariate partitions by defining $(\pi^{1,s},\pi^{1,g}) \preceq (\pi^{2,s},\pi^{2,g}) \iff \pi^{1,s} \preceq \pi^{2,s}$ and $\pi^{1,g} \preceq \pi^{2,g}$.
This partial order allows us to see coalescent processes as nondecreasing processes in the space of nested partitions.

\section{Simple nested exchangeable coalescents} \label{sec:snec}
In the aim to describe the joint dynamics of the species and gene partitions, we will now define a nondecreasing process with values in the nested partitions, called \emph{nested coalescent process}.
In this work we are only interested in \emph{simple} nested coalescents in the sense that at any jump event, called coalescence event, all blocks undergoing a modification merge into one single block.
Simple exchangeable coalescent processes were first introduced independently by Pitman \cite{pitmanCoalescents1999} and Sagitov \cite{sagitovGeneral1999}, and are usually called in the literature $\Lambda$-coalescents (see Introduction). 
Here we use the term \emph{simple} as in \cite{bertoinRandom2006}, to denote the analog of a $\Lambda$-coalescent process in the case of (nested) bivariate partitions.

Note that for any partition $\pi\in \mathcal{P}_{\infty}$ and any \emph{injection} $\sigma:\N\to\N$, there is a partition $\sigma(\pi)$ defined by
\[ i\overset{\sigma(\pi)}{\thicksim} j \iff \sigma(i)\overset{\pi}{\thicksim} \sigma(j). \]
For bivariate partitions we define in the same way $\sigma(\pi^s, \pi^g) := (\sigma(\pi^s),\sigma(\pi^g))$.
For random partitions, exchangeability is usually defined as invariance under the action of permutations $\sigma:\N\to\N$.
Here, to avoid degenerate processes we will define our processes as being invariant under the action of all injections $\sigma:\N\to\N$.
Indeed, by making this assumption we avoid dependence on, for instance, the total number of blocks of the partition.
An example of what we consider here a degenerate process with values in $\mathcal{P}_\infty$ would be a modified Kingman coalescent where any pair of blocks merge at rate $a=a(n)$, a function of $n$ the total number of blocks.
While this process would be invariant under permutations of $\N$, it is in general not invariant under injections, as their action can change the total number of blocks in a partition of $\N$.
Furthermore, given $(\Pi(t),\,t\geq 0)$ such a process and $n$ an integer, the restriction $(\Pi(t)_{|n}, \,t\geq 0)$ would not be a Markov process, as the jump rates of $\Pi(t)_{|n}$ depend on the whole partition $\Pi(t)$.
Invariance under injections ensures us that processes can be consistently defined, i.e.\ that $(\Pi(t)_{|n}, \,t\geq 0)$ will always be a Markov process.
It will also be useful in forthcoming proofs to consider invariance under injections rather than only permutations.

Since we consider processes with values in the space $\mathcal{P}_\infty$, let us endow it with the natural topology generated by the sets of the form $\{\pi' \in \mathcal{P}_\infty,\, \pi'_{|n} = \pi \}$ for $n\in\N$ and $\pi\in\mathcal{P}_n$.
It is readily checked that this topology is metrizable and makes $\mathcal{P}_\infty$ compact.
Also, note that the product topology on $\mathcal{P}_\infty^2$, and that induced on $\mathcal{N}_\infty$ also makes them compact.


\begin{definition}\label{snec}
  
  Let $\mathcal{R}:=\left( (\mathcal{R}^s(t), \mathcal{R}^g(t)) ,\; t\geq 0 \right)$ be a càdlàg Markov process with values in $\mathcal{P}^2_\infty$. 
  This process is called a \emph{simple nested exchangeable coalescent}, SNEC for short, if 
  \begin{enumerate}[i)]
    \item For any $t\geq0$, $\mathcal{R}(t)$ is nested;
    
    \item The process $( \mathcal{R}(t) ,\; t \geq 0  )$ evolves with simple coalescence events, that is for any time $t\geq 0$ such that $\mathcal{R}(t-)\neq \mathcal{R}(t)$, {there is a random bivariate partition $\widetilde{\mathcal{R}}(t) = (\widetilde{\mathcal{R}}^s(t), \widetilde{\mathcal{R}}^g(t))$ taking values in $\widetilde{\mathcal{P}}(\mathcal{R}(t-))$ such that}
    \[
    {\mathcal{R}(t) = \text{Coag}_2( \mathcal{R}(t-)  , \widetilde{\mathcal{R}}(t));} 
    \]
    
    \item \label{we} The semigroup of the process $( \mathcal{R}(t) ,\; t \geq 0  )$ is exchangeable, in the sense that for any $t,t'\geq 0$ and any \emph{injection} $\sigma:\N\to\N$,
    \begin{equation}\label{eq:defSNECexch}
    \left (\sigma(\mathcal{R}(t+t')) \mid \mathcal{R}(t) = \pi\right ) \ed \left (\mathcal{R}(t+t') \mid \mathcal{R}(t) = \sigma(\pi)\right ).
    \end{equation}
  \end{enumerate}
\end{definition}

To start the analysis of SNEC processes we would like to make some observations related to Definition \ref{snec}.
First note that $\mathcal{R}$ is a $\mathcal{N}_\infty$-valued process such that for every $t,t^\prime\geq 0$, the conditional distribution of $\mathcal{R}(t+t^\prime) $ given $\mathcal{R}(t) = \pi$ is the law of $\text{Coag}_2( \pi  , \tilde{\pi})$, where $\tilde{\pi}\in \widetilde{\mathcal{P}}( \pi)$, hence the law of $\tilde{\pi}$ depends on $t'$ but also on $\pi$.  
{Also, it will be clear from our main result (see Theorem \ref{thm:charac}) that $(\mathcal{R}^s (t),\; t\geq 0)$ is an exchangeable coalescent}, however $(\mathcal{R}^g (t),\;t\geq0)$ is not a Markov process in general, because the distribution of $\mathcal{R}^g(t+t')$ may depend on $\mathcal{R}^s (t)$.

We now turn to investigate the transitions of the restrictions of a SNEC to finite partitions, which relies on the following lemma.

\begin{lemma}[Projective Markov property] \label{lem:projMarkov}
  Let $\mathcal{R}=(\mathcal{R}(t) ,\; t\geq 0)$ be a process with values in $\mathcal{N}_\infty$ and for every integer $n$, write $\mathcal{R}_{| {n} }=(\mathcal{R}_{| {n} }(t) ,\; t\geq 0)$ for its restriction to $\mathcal{N}_n$.
  Then $\mathcal{R}$ is a SNEC in $\mathcal{N}_\infty$ if and only if for all $n\in\N$, $\mathcal{R}_{| {n} }$ 
  is a continuous-time Markov chain on the space $\mathcal{N}_n$ satisfying the analog of statements \textit{i)} -- \textit{iii)} of Definition \ref{snec}, namely:
  \begin{enumerate}[i)]
    \item For all $t\geq 0$, $\mathcal{R}_{| {n} }$ is nested;
    \item For $\rho,\pi\in\mathcal{N}_n$, the rate from $\rho$ to $\pi$ is zero if $\pi$ can not be obtained from a simple coalescence event;
    \item The Markov chain $( \mathcal{R}_{|n}(t) ,\; t \geq 0  )$ is exchangeable, in the sense that for any $t,t'\geq 0$, $\rho,\pi\in\mathcal{N}_n$ and $\sigma$ \emph{permutation} of $n$, the rate from $\rho$ to $\pi$ is equal to that from $\sigma(\rho)$ to $\sigma(\pi)$.
  \end{enumerate}
\end{lemma}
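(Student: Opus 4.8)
The plan is to prove the equivalence by two implications, with the non-trivial content being consistency of the restricted chains and the passage back from the finite level to $\mathcal{N}_\infty$.

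\textbf{From $\mathcal{N}_\infty$ to $\mathcal{N}_n$.} Suppose $\mathcal{R}$ is a SNEC. First I would check that $\mathcal{R}_{|n}$ is Markov. This is where the invariance under \emph{injections} (rather than only permutations) does the work: fix $n$ and let $\sigma$ be an injection that is the identity on $[n]$; then \eqref{eq:defSNECexch} shows that the conditional law of $\mathcal{R}(t+t')_{|n}$ given $\mathcal{R}(t)$ depends on $\mathcal{R}(t)$ only through $\mathcal{R}(t)_{|n}$ — indeed, applying $\sigma$ fixes $\mathcal{R}(t)_{|n}$ while $\sigma(\mathcal{R}(t))$ can be any element of $\mathcal{N}_\infty$ whose restriction to $[n]$ equals $\mathcal{R}(t)_{|n}$, so all such states yield the same restricted law. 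Combined with the Markov property of $\mathcal{R}$ and the tower property, this gives that $\mathcal{R}_{|n}$ is a Markov process; since $\mathcal{N}_n$ is finite and the paths are c\`adl\`ag, it is a continuous-time Markov chain. Property i) for $\mathcal{R}_{|n}$ is immediate because restriction preserves nestedness. For ii): a jump of $\mathcal{R}_{|n}$ at time $t$ comes from a jump of $\mathcal{R}$ at time $t$, and by Definition \ref{snec}ii) we have $\mathcal{R}(t) = \mathrm{Coag}_2(\mathcal{R}(t-),\widetilde{\mathcal{R}}(t))$ with $\widetilde{\mathcal{R}}(t)$ simple; restricting, $\mathcal{R}(t)_{|n} = \mathrm{Coag}_2(\mathcal{R}(t-)_{|n}, \widetilde{\mathcal{R}}(t)_{|?})$, and one checks the restricted recipe is still simple and lies in $\widetilde{\mathcal{P}}(\mathcal{R}(t-)_{|n})$, so transitions not of this form have rate $0$. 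Property iii) is the infinitesimal form of \eqref{eq:defSNECexch} restricted to permutations of $[n]$: differentiate the semigroup identity at $t'=0$ to get equality of jump rates between $(\rho,\pi)$ and $(\sigma\rho,\sigma\pi)$.

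\textbf{From $\mathcal{N}_n$ to $\mathcal{N}_\infty$.} Conversely, suppose each $\mathcal{R}_{|n}$ is a CTMC on $\mathcal{N}_n$ satisfying i)--iii). The first thing to establish is \emph{consistency}: the restriction of $\mathcal{R}_{|n+1}$ to $\mathcal{N}_n$ is (in law, as a process) $\mathcal{R}_{|n}$. This should be extracted from the hypothesis that $\mathcal{R}_{|n}$ is itself a Markov chain on $\mathcal{N}_n$ whose law coincides with that of the process obtained by restricting $\mathcal{R}$ — in other words consistency is built into the statement ``$\mathcal{R}_{|n}$ is the restriction of $\mathcal{R}$''; what actually needs checking is that the hypothesized family is the family of restrictions of a \emph{single} process, which is guaranteed by the consistency of the sequence of partitions (every consistent sequence $\pi^{(n)}$ comes from a unique $\pi\in\mathcal{P}^2_\infty$, as recalled in the notation section) together with compactness of $\mathcal{N}_\infty$ and Kolmogorov's extension theorem applied to the finite-dimensional laws. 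This yields a well-defined c\`adl\`ag Markov process $\mathcal{R}$ on $\mathcal{N}_\infty$. Then: property i) of Definition \ref{snec} holds because nestedness of a bivariate partition of $\N$ is equivalent to nestedness of all its restrictions. Property ii) holds because a jump of $\mathcal{R}$ projects, for $n$ large enough, to a jump of $\mathcal{R}_{|n}$ which by hypothesis ii) is a simple coalescence; taking $n\to\infty$ and using that simple recipes are stable under the consistent limit, $\mathcal{R}(t) = \mathrm{Coag}_2(\mathcal{R}(t-),\widetilde{\mathcal{R}}(t))$ with $\widetilde{\mathcal{R}}(t)\in\widetilde{\mathcal{P}}(\mathcal{R}(t-))$. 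Property iii), the semigroup exchangeability under all injections $\sigma:\N\to\N$, is obtained by approximation: for fixed $t,t'$ and $\pi$, the law in \eqref{eq:defSNECexch} is determined by its restrictions to $[n]$, on which $\sigma$ acts (eventually, after composing with a permutation of a large enough finite set) as a permutation, and there hypothesis iii) at finite levels gives the required identity; a short argument handles the fact that a general injection restricted to $[n]$ need not land in $[n]$ by enlarging $n$.

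\textbf{Main obstacle.} I expect the delicate point to be the converse direction, specifically making rigorous the reduction of injection-invariance on $\mathcal{N}_\infty$ to permutation-invariance at finite levels, together with the verification that the ``simple coalescence'' structure (the recipe lying in $\widetilde{\mathcal{P}}(\mathcal{R}(t-))$, and simplicity) genuinely passes to the projective limit — one must rule out, e.g., that infinitely many distinct finite-level merges accumulate into a non-simple event at the level of $\N$. Handling this cleanly requires pinning down that at each jump time of $\mathcal{R}$ there is $n_0$ such that for all $n\ge n_0$ the chain $\mathcal{R}_{|n}$ jumps at that time and all these jumps are ``the same'' simple event, which follows from right-continuity and the discreteness of each $\mathcal{N}_n$. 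In the forward direction, the only subtlety is justifying that the restricted process inherits not merely the Markov property but a genuine CTMC structure (finitely many states, exponential holding times), which is automatic once Markovianity and c\`adl\`ag paths on the finite set $\mathcal{N}_n$ are in hand.
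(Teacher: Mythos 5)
Your proof follows essentially the same route as the paper's: invariance under injections fixing $[n]$ yields the projective Markov property, and the converse reduces a general injection to a permutation of $[N]$ with $N=\max\{\sigma(1),\dots,\sigma(n)\}$ before letting $n\to\infty$ (the Kolmogorov-extension detour is unnecessary, since $\mathcal{R}$ is already given as a process on $\mathcal{N}_\infty$). One intermediate claim is stated too strongly -- for a fixed state $\pi$, the images $\sigma(\pi)$ under injections fixing $[n]$ need \emph{not} exhaust all states with restriction $\pi_{|n}$ (e.g.\ $\pi=\mathbf{1}_\infty$ is mapped only to itself) -- and the paper repairs exactly this point by comparing every state $\rho'$ with $\rho'_{|n}=\rho$ to a common \emph{rich} reference state $\rho^{\star}$ (infinitely many species blocks, each containing infinitely many infinite gene blocks), of which all such $\rho'$ are images under injections that are the identity on $[n]$.
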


\begin{proof} Let $\mathcal{R}$ be a SNEC in $\mathcal{N}_\infty$ and let $n \in \N$. Let us prove that $\mathcal{R}_{| {n} }$ satisfies the claimed properties.
  Let  $\rho\in\mathcal{N}_{n}$.
  Pick  $\rho^{\star}\in\mathcal{N}_{\infty}$ such that $\rho^{\star}_{|n}=\rho$, and which contains an infinite number of species blocks, each of which containing an infinite number of gene blocks, each of them being an infinite subset of $\N$.
  Now for any $\rho'\in\mathcal{N}_\infty$ such that $\rho'_{|n}=\rho$, there is an injection $\sigma:\N\to\N$ such that $\sigma(\rho^{\star})=\rho'$ and such that $\sigma_{|[n]}=\text{id}_{[n]}$, so for any $t,t'\geq 0$,
  \begin{align*}
  (\mathcal{R}_{|n}(t+t') \,\mid\, \mathcal{R}(t) =\rho') &\ed (\mathcal{R}_{|n}(t+t') \,\mid\, \mathcal{R}(t) =\sigma(\rho^{\star}))\\
  &\ed ( \sigma(\mathcal{R})_{|n}(t+t') \,\mid\, \mathcal{R}(t) =\rho^{\star}) \\
  &\ed ( \mathcal{R}_{|n}(t+t') \,\mid\, \mathcal{R}(t) =\rho^{\star}).
  \end{align*}
  Since this is valid for any $\rho'$ such that $\rho'_{|n}=\rho$, this conditional distribution depends only on $\{\mathcal{R}_{|n}(t)=\rho\}$, which proves that $\mathcal{R}_{|n}$ is a Markov process.
  Now the assumption that $\mathcal{R}$ has càdlàg paths ensures us that the process $\mathcal{R}_{|n}$ stays some positive time in each visited state \textit{a.s.}
  Therefore $\mathcal{R}_{|n}$ is a continuous-time Markov chain. Now statements \textit{i)} -- \textit{iii)} are easily deduced from Definition \ref{snec}.
  
  Conversely, let $\mathcal{R}=(\mathcal{R}(t),\,t\geq 0)$ be a process with values in $\mathcal{N}_\infty$ such that for all $n\in\mathbb{N}$, $\mathcal{R}_{|n}$ is a Markov chain satisfying \textit{i) -- iii)} of the lemma.
  Then \textit{i)} and \textit{ii)} of Definition \ref{snec} follow immediately, and it remains to check that for any injection $\sigma :\N\to\N$, the equality in distribution \eqref{eq:defSNECexch} holds.
  
  Let $\sigma :\N\to\N$ be an injection and fix $n\in \N$.
  Define $N = \max\{ \sigma(1),\sigma(2), \ldots, \sigma(n)\}$, and consider $\widetilde{\sigma}:[N]\to[N]$ a permutation such that for all $1\leq i \leq n$, $\widetilde{\sigma}(i)=\sigma(i)$.
  For instance, one can define inductively for $n+1\leq i\leq N$,
  \[ \widetilde{\sigma}(i) := \min ([N]\setminus\{\sigma(1),\sigma(2), \ldots, \sigma(i-1)\}). \]
  Now notice that for any $t\geq 0$ and any $\pi\in\mathcal{N}_\infty$,
  \[ \sigma(\pi)_{|n} = \widetilde{\sigma} (\pi_{|N} )_{|n}, \]
  which enables us to write, for any $t,t'\geq 0$,
  \begin{align*}
  (\sigma(\mathcal{R})_{|n}(t+t') \mid \mathcal{R}(t)=\pi) &\ed  (\widetilde{\sigma} (\mathcal{R}_{|N}(t+t') )_{|n} \mid  \mathcal{R}(t)=\pi )\\
  &\ed (\mathcal{R}_{|N}(t+t')_{|n} \mid  \mathcal{R}_{|N}(t)=\widetilde{\sigma}(\pi_{|N}) )\\
  &\ed (\mathcal{R}_{|n}(t+t') \mid  \mathcal{R}_{|n}(t)=\widetilde{\sigma}(\pi_{|N})_{|n} )\\
  &\ed (\mathcal{R}_{|n}(t+t') \mid  \mathcal{R}_{|n}(t)=\sigma(\pi)_{|n} )\\
  &\ed (\mathcal{R}_{|n}(t+t') \mid  \mathcal{R}(t)=\sigma(\pi) ).
  \end{align*}
  The passage to the second line in the last display is a consequence of \textit{iii)} of the lemma, and we used the fact that restrictions are Markov chains, i.e.\ ${(\mathcal{R}_{|n}(t+t') \mid  \mathcal{R}_{|n}(t)=\pi_{|n} ) } \ed { (\mathcal{R}_{|n}(t+t') \mid  \mathcal{R}(t)=\pi)}$.
  Since $n$ is arbitrary in
  \[ (\sigma(\mathcal{R})_{|n}(t+t') \mid \mathcal{R}(t)=\pi) \ed { (\mathcal{R}_{|n}(t+t') \mid  \mathcal{R}(t)=\sigma(\pi) )},\]
  we have shown \eqref{eq:defSNECexch}, concluding the proof.
\end{proof}

This key lemma enables us to give the following first properties of SNEC processes.
\begin{proposition}
  Let $\mathcal{R}$ be a SNEC. 
  \begin{itemize}
    \item If the process $\mathcal{R}$ starts from an exchangeable nested partition $\mathcal{R}(0)$, then for any $t\geq0$, $ \mathcal{R}^g(t)$ and $ \mathcal{R}^s(t)$ are exchangeable partitions.
    
    \item The process $\mathcal{R}$ is a Feller process, so in particular it satisfies the strong Markov property.
    
    \item Conditional on $\mathcal{R}(t)$, if $\bar{\mathcal{R}}(t)$ denotes the link partition of $\mathcal{R}(t)$
    then for any $t,t^\prime\geq0$, the distribution of $\mathcal{R}^g(t+t')$ is the law of $\text{Coag}(\mathcal{R}^g(t), \tilde{\pi}^g )$, where $\tilde{\pi}^g$ is a random partition such that
    $\sigma(\tilde{\pi}^g) \overset{d}{=} \tilde{\pi}^g$ for any permutation $\sigma$ preserving $\bar{\mathcal{R}}(t) $ i.e., such that
    \begin{equation} \label{condnestAL}
    i \overset{ \bar{\mathcal{R}}(t) }{\thicksim} j \Rightarrow \sigma(i)  \overset{  \bar{\mathcal{R}}(t) }{\thicksim}  \sigma(j).
    \end{equation}
  \end{itemize}
\end{proposition}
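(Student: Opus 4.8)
I would establish the three assertions in turn; the first two are soft, and the last one carries the content.

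\emph{Exchangeability of the marginals.} It suffices to show that $\mathcal R(t)$ is an exchangeable bivariate partition whenever $\mathcal R(0)$ is, since passing to the species and gene components preserves exchangeability. Fix a finite-support permutation $\sigma$ of $\N$ and a Borel set $A\subseteq\mathcal N_\infty$. Conditioning on $\mathcal R(0)$ and applying the semigroup exchangeability \eqref{eq:defSNECexch} (with $0,t$ in place of $t,t'$) gives $\P(\sigma(\mathcal R(t))\in A)=\int\P\big(\mathcal R(t)\in A\mid\mathcal R(0)=\sigma(\pi)\big)\,\mathcal L_{\mathcal R(0)}(d\pi)$; the change of variables $\pi\mapsto\sigma(\pi)$ together with the exchangeability of $\mathcal R(0)$ turns the right-hand side into $\P(\mathcal R(t)\in A)$. (That the conditional distributions appearing here are bona fide transition kernels follows from the Feller property, established next.)

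\emph{Feller property.} Since $\mathcal N_\infty$ is compact metric, $C(\mathcal N_\infty)$ is the uniform closure of the algebra of \emph{cylinder functions} $f(\pi)=g(\pi_{|n})$ with $n\in\N$ and $g\colon\mathcal N_n\to\R$. For such an $f$, Lemma \ref{lem:projMarkov} gives $P_t f(\pi)=\E_\pi[g(\mathcal R_{|n}(t))]$, and that same lemma shows this depends on $\pi$ only through $\pi_{|n}$; hence $P_tf$ is again a cylinder function, in particular continuous, and as $P_t$ is a contraction and the cylinder functions are dense, $P_t$ maps $C(\mathcal N_\infty)$ into itself. Strong continuity at $0$ is checked first on cylinder functions, where $\|P_tf-f\|_\infty\le 2\|g\|_\infty\sup_{\rho\in\mathcal N_n}\P_\rho(\mathcal R_{|n}(t)\neq\rho)\to 0$ because $\mathcal N_n$ is finite and $\mathcal R_{|n}$ is a right-continuous Markov chain, and then extended to all of $C(\mathcal N_\infty)$ by density. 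Thus $(P_t)_{t\ge0}$ is a Feller semigroup, and since the paths are càdlàg, $\mathcal R$ is a Feller process, whence the strong Markov property.

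\emph{Law of the gene recipe.} By the observations following Definition \ref{snec}, conditionally on $\mathcal R(t)=\pi$ we have $\mathcal R(t+t')=\text{Coag}_2(\pi,\widetilde{\mathcal R}(t+t'))$ with $\widetilde{\mathcal R}(t+t')\in\widetilde{\mathcal P}(\pi)$, so its gene component reads $\mathcal R^g(t+t')=\text{Coag}(\pi^g,\tilde\pi^g)$ where $\tilde\pi^g$ is the gene component of $\widetilde{\mathcal R}(t+t')$; what remains is the invariance of the law of $\tilde\pi^g$ under every permutation $\sigma$ of the gene-block indices satisfying \eqref{condnestAL} (i.e.\ every automorphism of $\bar\pi$). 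The map $\kappa\mapsto\text{Coag}(\pi^g,\kappa)$ is injective on recipes — this is exactly the uniqueness that defines the link partition, since the $\pi^g_i$ are nonempty and disjoint — so it is equivalent to prove the invariance at the level of $\tilde\pi^g$ itself. The obvious strategy, lifting $\sigma$ to a permutation of $\N$ that fixes $\pi$, is the main obstacle: it fails because $\sigma$ may send a gene block $\pi^g_i$ onto a gene block $\pi^g_{\sigma(i)}$ of a different cardinality, and no injection of $\N$ can realize such a relabelling while fixing $\pi$. I would circumvent this with a reference partition $\rho^\star\in\mathcal N_\infty$ having link partition $\bar\pi$ and all of its gene blocks infinite; note $\widetilde{\mathcal P}(\rho^\star)=\widetilde{\mathcal P}(\pi)$, as $\widetilde{\mathcal P}(\cdot)$ depends only on the link partition.

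The proof then splits in two. First, for $\rho^\star$ the lift does exist: given an automorphism $\sigma$ of $\bar\pi$, choose bijections between the (countably infinite) blocks $\rho^{\star g}_i$ and $\rho^{\star g}_{\sigma(i)}$ and paste them into a permutation $\tau$ of $\N$; because $\sigma$ preserves $\bar\pi$ one checks $\tau(\rho^\star)=\rho^\star$. Applying \eqref{eq:defSNECexch} with $\tau$ from starting state $\rho^\star$, using the identity $\tau\big(\text{Coag}(\rho^{\star g},\tilde\rho^{\star g})\big)=\text{Coag}(\rho^{\star g},\sigma(\tilde\rho^{\star g}))$ and the injectivity of $\text{Coag}(\rho^{\star g},\cdot)$, one obtains $\sigma(\tilde\rho^{\star g})\ed\tilde\rho^{\star g}$ conditionally on $\mathcal R(t)=\rho^\star$. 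Second, to transfer this back to $\pi$, define an injection $\sigma_1\colon\N\to\N$ sending each $x$ injectively into the $\rho^\star$-gene block whose index equals that of the $\pi$-gene block containing $x$ (possible since those $\rho^\star$-blocks are infinite); one checks $\sigma_1(\rho^\star)=\pi$. Then \eqref{eq:defSNECexch} gives $(\mathcal R(t+t')\mid\mathcal R(t)=\pi)\ed(\sigma_1(\mathcal R(t+t'))\mid\mathcal R(t)=\rho^\star)$, and since $\sigma_1\big(\text{Coag}(\rho^{\star g},\tilde\rho^{\star g})\big)=\text{Coag}(\pi^g,\tilde\rho^{\star g})$, the injectivity of $\text{Coag}(\pi^g,\cdot)$ yields $(\tilde\pi^g\mid\mathcal R(t)=\pi)\ed(\tilde\rho^{\star g}\mid\mathcal R(t)=\rho^\star)$. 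Combining the two parts gives the claimed invariance, completing the proof.
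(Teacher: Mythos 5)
Your proof is correct, and the first two points follow essentially the paper's own route: the paper likewise approximates a continuous $f$ by cylinder functions (via uniform continuity on the compact space $\mathcal{N}_\infty$) and invokes Lemma \ref{lem:projMarkov} to see that $P_tf$ depends only on finite restrictions; your uniform bound for continuity at $t=0$ via the holding times of the finite chains is a harmless variant of the paper's pointwise argument from right-continuity of paths. The third point is where you genuinely diverge. You correctly flag the obstruction — a permutation of gene-block indices preserving $\bar{\mathcal{R}}(t)$ need not lift to a permutation of $\N$ fixing $\mathcal{R}(t)$, since it may exchange blocks of different cardinalities — and you resolve it by inflating to a reference partition $\rho^\star$ with the same link partition and all gene blocks infinite, where the lift exists, then transferring back through an injection and the injectivity of $\kappa\mapsto\text{Coag}(\pi^g,\kappa)$. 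The paper goes the opposite way: it reduces ``without loss of generality'' to the conditioning $\{\mathcal{R}(t)=(\rho,\mathbf{0}_\infty)\}$, collapsing all gene blocks to singletons. There the recipe is literally $\mathcal{R}^g(t+t')$ itself (since $\text{Coag}(\mathbf{0}_\infty,\kappa)=\kappa$), index permutations of gene blocks are just permutations of $\N$, and the invariance is a one-line application of \textit{iii)} of Definition \ref{snec}. The two reductions are mirror images resting on the same fact — injection-invariance lets one replace $\mathcal{R}(t)$ by any nested partition with the same link partition — which the paper leaves implicit in its ``without loss of generality'' and which your injection $\sigma_1$ makes explicit. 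Your version is longer because the fat representative forces you to argue the lifting and the pull-back through $\text{Coag}(\pi^g,\cdot)$; the paper's choice of representative makes both steps vacuous, which is what its brevity buys.
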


Another property is that the process $( \mathcal{R}^s(t) ,\; t \geq 0  )$ is a simple exchangeable coalescent process, but we do not prove it at this point as it will be clear from Theorem \ref{thm:charac}.

\begin{proof}
  The first point of the proposition is immediate considering \textit{iii)} of Definition \ref{snec}.
  
  As for the second point, recall that $\mathcal{N}_\infty$ is endowed with the topology generated by the sets of the form $\{\pi\in\mathcal{N}_\infty, \, \pi_{|n} = \widehat{\pi} \}$, for $n\in\N,\,\widehat{\pi}\in\mathcal{N}_n$. It is easy to see that this topology is metrized by $d(\pi,\pi'):=(\sup\{n\in\N,\, \pi_{|n}=\pi'_{|n}\})^{-1}$ (with $(\sup\N)^{-1}=0$) and that $(\mathcal{N}_\infty,d)$ is compact.
  
  We need to show that for any continuous (then bounded) function $f:\mathcal{N}_\infty\to \R$, the function $P_tf:\pi \mapsto \E_{\pi}f(\mathcal{R}(t))$ (where $\E_\pi(\cdot)=\E(\cdot\,|\, \mathcal{R}(0)= \pi )$) is continuous, and that $P_tf(\pi)\to f(\pi)$ as $t\to 0$.
  By definition the process is càdlàg so we have almost surely $f(\mathcal{R}(t))\to f(\mathcal{R}(0))$ so clearly by taking expectations $P_tf(\pi)\to f(\pi)$ as $t\to 0$.
  Now to show that $P_tf$ is continuous, consider $n\in\N$ and let $\{\widehat{\pi}^1, \ldots, \widehat{\pi}^k\}$ be an enumeration of $\mathcal{N}_n$.
  We pick $\pi^1, \ldots, {\pi}^k \in \mathcal{N}_\infty$ such that $\pi^i_{|n} = \widehat{\pi}^i$, and define $\widehat{f}_n:\mathcal{N}_\infty \to \R$ by
  \[ \widehat{f}_n(\pi) = f(\pi^i) \quad \text{ if } \pi_{|n} = \widehat{\pi}^i. \]
  Now since $f$ is continuous on $(\mathcal{N}_\infty,d)$ which is compact, $f$ is uniformly continuous, which means that
  \[ \omega_n := \sup_{\pi\in\mathcal{N}_\infty}\abs{f(\pi)-\widehat{f}_n(\pi)} \to 0 \quad \text{as }n \to \infty. \]
  For $t>0$ and $\pi, \pi' \in\mathcal{N}_\infty$, we have
  \begin{equation}\label{eq:feller}
  \abs{P_tf(\pi)-P_tf(\pi')} \leq \left\abs{\E_{\pi}\widehat{f}_n(\mathcal{R}(t))-\E_{\pi'}\widehat{f}_n(\mathcal{R}(t)) \right} + 2\omega_n.
  \end{equation}
  Now suppose $\pi_{|n} = \pi'_{|n}$.
  Since $\widehat{f}_n$ depends only on $\pi_{|n}$ and by Lemma \ref{lem:projMarkov} the process $\mathcal{R}_{|n}$ has the same distribution under $\P_\pi$ or $\P_{\pi'}$, we have the equality $\E_{\pi}\widehat{f}_n(\mathcal{R}(t))=\E_{\pi'}\widehat{f}_n(\mathcal{R}(t)) $, and plugging that into \eqref{eq:feller}, we get
  \[ \sup\{\abs{P_tf(\pi)-P_tf(\pi')}, \;\pi,\pi'\in\mathcal{N}_\infty, \, \pi_{|n}=\pi'_{|n}\} \leq 2\omega_n \to 0 \quad \text{as } n\to \infty,  \]
  showing that $P_tf$ is continuous.
  
  For the third point of the proposition, $\mathcal{R}^g(t+t')$ is clearly of the form $\text{Coag}(\mathcal{R}^g(t), \tilde{\pi}^g )$, where $\tilde{\pi}=(\tilde{\pi}^s, \tilde{\pi}^g)$ is a random recipe partition whose distribution depends on $\mathcal{R}(t)$ and $t'$.
  Let us show that the conditional distribution of $\tilde{ \pi }$ given $\mathcal{R}(t)$ is invariant under the action of permutations preserving $\bar{\mathcal{R}}(t)$.
  
  Without loss of generality, we can work under the conditioning $\{\mathcal{R}(t)=(\rho,\mathbf{0}_\infty)\}$, where $\rho$ is any partition and $\mathbf{0}_\infty$ is the partition into singletons, so that for all $\pi$, we have $\text{Coag}(\mathcal{R}^g(t), \pi) = \pi$.
  {In particular, note that in this case we have $\mathcal{R}^g(t+t')=\tilde\pi^g$, and $\bar{\mathcal{R}}(t) = \rho$.
    Let $\sigma$ be a permutation such that $\sigma(\rho)=\rho$.}
  The problem then reduces to showing that
  \[ \left (\sigma(\mathcal{R}^g(t+t'))\mid\mathcal{R}(t)=(\rho, \mathbf{0}_\infty)\right ) \ed \left (\mathcal{R}^g(t+t')\mid\mathcal{R}(t)=(\rho, \mathbf{0}_\infty)\right ), \]
  which is now an immediate consequence of \textit{iii)} in Definition \ref{snec}.
\end{proof}

Let us now investigate the transition rates of the Markov chains $\mathcal{R}_{| {n} }$ appearing in Lemma \ref{lem:projMarkov}, for every $n\in\mathbb{N}$.
In this direction fix $n\in\N$, let $\rho\in\cN_\infty$ and $\pi\in\cN_{n}$ and denote the jump rate of $\mathcal{R}_{|n}$ from $\rho_{|n}$ to $\pi$ by
\begin{equation}\label{rate}
q_n({\rho,{\pi}}) := \lim_{t\rightarrow 0+} \frac{1}{t} \mathbb{P}_\rho( \mathcal{R}_{|n}(t) = {\pi})
\end{equation}
where $\P_\rho(\cdot)=\P(\cdot\,|\, \mathcal{R}(0)= \rho )$.
The index $n$ is not necessary in the notation as it can be read in the partition $\pi$.
However we keep it as it will ease reading. Remind that $q_n({\rho,{\pi}})$ only depends on $\rho$ through $\rho_{|n}$.
As is remarked in Lemma \ref{lem:projMarkov}, $q_n({\rho,\pi})$ equals zero if $\pi$ is not obtained from $\rho_{|n}$ by coagulating blocks according to a partition in $\widetilde{\mathcal{P}}(\rho_{|n})$, that is by merging
some species blocks of $\rho^s_{|n}$ into one and some gene blocks of the new species into one. 
Also observe that the rates do not depend on the sizes of the gene blocks in the starting configuration so there is no loss of generality if we consider that $\rho^g=\mathbf{0}_\infty$, the trivial partition made of singletons.
Of course changing the starting partition $\rho$ has some effect on the arrival partition $\pi$.
This is why we will need to write transition rates in another way, giving more emphasis on the dependence of the coagulation mechanism upon the starting partition.

Fix $n\in\bar\N$ and suppose that $\mathcal{R}_{|n}$ starts from $n$ singleton gene blocks allocated into $b$ species.
Since labels of genes do not affect the transition rates, we will keep the data of the number of genes in each species in a vector $\mathbf{g}=(g_1,\dots, g_b)$. 
This vector suffices to describe the starting position. 
Indeed $|\mathbf{g}|=b$ gives the number of species and $\sum_{i=1}^bg_i=n$ gives the number of genes.

Now the coagulation mechanism will be described by two terms. We will say that a gene block \emph{participates in the coalescence event} if it merges with other gene blocks. We will say that a species block \emph{participates in the coalescence event} if it merges with other species blocks or if it contains gene blocks that participate in the coalescence event. 

The behaviour of the species blocks will be encoded in a vector $\mathbf{s}=(s_1,\dots, s_b)$ with coordinates taking values in $\{0,1\}$. Namely, $s_i=1$ if the $i$-th species participates in the coalescence event and $s_i=0$ otherwise.
The total number of species involved in the event is $k=\sum_{i=1}^b s_i$.

The behaviour of the gene blocks will be encoded by an array $\mathbf{c}=(\mathbf{c}_1,\dots, \mathbf{c}_b)$ where $\mathbf{c}_i$ is a vector describing which gene blocks in the $i$-th species participate in the coalescence event. 
If $s_i=1$ ($i$-th species block participating in the coalescence event), then $\mathbf{c}_i=(c_{i1},\dots,  c_{ig_i})$ is such that $c_{ij}=1$ if $j$-th gene block inside $i$-th species block participates in the coalescence event and $c_{ij}=0$ otherwise.
If $s_i=0$, the $i$-th species block is not participating in the event and so neither will the gene blocks within it. 
In this case we set $\mathbf{c}_i=(0,0,\ldots,0)=\mathbf{0}$ and nothing happens at the gene level.
Note that the number of gene blocks participating in the coalescence event is $\sum_{i,j}c_{ij}$.

Note that all such arrays $ (\mathbf{g}, \mathbf{s}, \mathbf{c}) $ do not necessarily code for observable coalescence events, so we will define a restricted set of arrays of interest for our study.
First, note that one needs to have $\sum_i s_i \geq 2$ in order to observe a species merger.
If $\sum_i s_i = 1$, then there is a gene coalescence if and only if $\sum_{i,j} c_{ij} \geq 2$.
Also, we will restrict ourselves to the arrays $(\mathbf{g}, \mathbf{s}, \mathbf{c})$ such that $\sum_{i,j} c_{ij} \neq 1$, because \textit{a sole gene coalescing} is not distinguishable from \textit{no gene coalescing}.

Formally, we consider finite arrays $ (\mathbf{g}, \mathbf{s}, \mathbf{c}) $ satisfying the assumptions
\begin{align}
&\begin{aligned}
&\text{If } |\mathbf{g}|=b, \text{ then } \mathbf{s}\in\{0,1\}^{b} \text{ and } \mathbf{c}=(\mathbf{c}_1, \ldots, \mathbf{c}_b), \; \mathbf{c}_i\in\{0,1\}^{g_i}, \\ & \text{ with } s_i=0 \implies \forall j, c_{ij}=0
\end{aligned} \tag{\textbf{H1}} \label{eq:hypothesesarray1} \\
&  {\sum_{i,j} c_{ij} < 2 \implies \sum_i s_i \geq 2, \tag{\textbf{H2}}} \label{eq:hypothesesarray2} \\
\text{and }& \sum_{i,j} c_{ij} \neq 1. \tag{\textbf{H3}}\label{eq:hypothesesarray3}
\end{align}
We denote by $\mathcal{C}$ the set of arrays $ (\mathbf{g}, \mathbf{s}, \mathbf{c}) $ satisfying (\textbf{H1}), (\textbf{H2}) and (\textbf{H3}).

We then denote the transition rate of $\mathcal{R}_{|n}$ from a partition described by $\mathbf{g}$ (such that $\sum g_i=n$) to a new partition obtained by merging species and genes according to $\mathbf{s}$ and $\mathbf{c}$ by
\[
q_{b,k}(\mathbf{g}, \mathbf{s}, \mathbf{c}).
\]
Here again indices $b$ and $ k$ are not necessary but permit to read easily the coalescence event at the species level ($k=\sum s_i$ species merging among $b=|\mathbf{g}|$).
We insist on the fact that we consider only arrays $(\mathbf{g}, \mathbf{s}, \mathbf{c}) \in \mathcal{C}$ when we study the rates $q_{b,k}(\mathbf{g}, \mathbf{s}, \mathbf{c})$, and that these quantities determine uniquely the law of a SNEC $\mathcal{R}$, since they describe completely the rates associated to each finite-space continuous-time Markov chain $\mathcal{R}_{|n}$.

We introduce a notation that we will use in the next result for ease of writing.
For $\mu$ a probability on $[0,1]$, consider any probability space where $Z_1, Z_2, \ldots$ are i.i.d.\ with distribution $\mu$ and denote the expectation $\E_{\mu}$.
Now take a vector $(g_i,\, i\in S)$ of integers, where $S$ is a finite subset of $\N$.
We define
\begin{equation}\label{eq:deftruc}
\begin{aligned}
\mathcal{U}(\mu, (g_i, i\in S)) 
&= \E_{\mu}\Big [\sum_{i\in S}g_i Z_i(1-Z_i)^{g_i-1} \!\!\!\prod_{j\in S\,:\, j\neq i}(1-Z_j)^{g_j}\Big ]\\
&=  \sum_{i\in S} g_{i}\int_{[0,1]}\mu(dq)q(1-q)^{g_{i}-1} \!\!\!\prod_{j\in S\,:\, j\neq i}\int_{[0,1]}\mu(dq)(1-q)^{g_{j}}.
\end{aligned}
\end{equation}
This can be thought of as the probability that a random array $(c_{ij}, \,i\in S, 1\leq j \leq g_i)$ does not satisfy \eqref{eq:hypothesesarray3}, where conditional on $(Z_i,\, i \in S)$ the variables $(c_{ij})$ are independent, and for all $i,j$, $c_{ij}=1$ with probability $Z_i$. 
We can now state our main result.

\begin{theorem}\label{thm:charac}
  There exist two non-negative real numbers $a_s, a_g\geq 0$ and two measures:
  \begin{itemize}
    \item $\nu_s$ on $E=(0,1]\times\mathcal M_1([0,1])$;
    \item $\nu_g$ on $(0,1]$;
  \end{itemize}
  such that
  \begin{subequations}\label{eq:condspecies}
    \begin{gather}
    \int_E\nu_s(dp,d\mu)p^2<\infty, \label{eq:condspecies1} \\
    \int_E\nu_s(dp,d\mu)p\int_{[0,1]}\mu(dq)q^2<\infty, \label{eq:condspecies2}
    \end{gather}
  \end{subequations}
  \begin{equation}
  \text{ and }\int_{(0,1]}\nu_g(dq)q^2 < \infty, \label{eq:condgenes}
  \end{equation}
  and such that for any array $(\mathbf{g}, \mathbf{s}, \mathbf{c})\in\mathcal{C}$ such that 
  $|\mathbf{g}|=b$, $\sum_i s_i=k$ and $\sum_j c_{ij}=l_i$,
  \begin{equation}\label{eq:2species}
  \begin{aligned}
  q_{b,k}(\mathbf{g}, \mathbf{s}, \mathbf{c}) = &
  \int_E \nu_s(dp, d\mu)p^{k}(1-p)^{b-k} \Biggl (\prod_{i\,:\,{s_i}=1}\int_{[0,1]}\mu(dq)q^{l_i}(1-q)^{g_{i}-l_i} \\
  & \qquad \qquad \qquad \qquad + \1{\{\mathbf{c}=\mathbf{0}\}}\, \mathcal{U}(\mu, (g_i, \,1\leq i \leq b \text{ with } s_i = 1))\Biggr ) \\
  &+ a_s \1{\left \{k = 2, \mathbf{c}=\mathbf{0}\right \}}\\
  &+ \1{\{k=1\}}\left (a_g \1{\{l_I = 2\}} + \int_{(0,1]}\nu_g(dq)q^{l_I}(1-q)^{g_I-l_I}  \right ),
  \end{aligned}
  \end{equation}
  where the functional $\mathcal{U}$ is defined in \eqref{eq:deftruc} and $I=I(\mathbf{g}, \mathbf{s}, \mathbf{c})$, in the case $k=1$, is the unique index in $\{1, 2,\ldots,b\}$ such that $s_I=1$.
  
  {Furthermore, this correspondence between laws of SNEC processes and quadruplets $(a_s,a_g,\nu_s,\nu_g)$ satisfying \eqref{eq:condspecies} and \eqref{eq:condgenes} is bijective.}
\end{theorem}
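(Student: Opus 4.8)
The plan is to reduce everything to the finite-state Markov chains $\mathcal{R}_{|n}$, which by Lemma~\ref{lem:projMarkov} determine the law of $\mathcal{R}$, and to classify all consistent exchangeable families of jump rates $q_{b,k}(\mathbf{g},\mathbf{s},\mathbf{c})$. The identity $\mathcal{R}_{|n} = (\mathcal{R}_{|n+1})_{|n}$ forces \emph{consistency relations} among these rates, obtained by recording whether an extra gene block (inserted as a singleton inside an existing species) or an extra singleton species block participates in a prescribed coalescence event. The delicate point here is the treatment of the degenerate cases behind hypothesis~\eqref{eq:hypothesesarray3}: when exactly one gene block would participate in the $(n+1)$-event the corresponding transition of $\mathcal{R}_{|n+1}$ is really a transition with $\mathbf{c}=\mathbf{0}$, and when the removed gene carried an entire species the $(n+1)$-event may project to no transition at all; bookkeeping this absorbed mass is exactly what produces the indicator $\1\{\mathbf{c}=\mathbf{0}\}$ and the functional $\mathcal{U}$ in~\eqref{eq:2species}.

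First I would handle the pure-gene regime $k=1$: restricting attention to the gene blocks sitting inside a single species and using invariance under injections, the relevant rates satisfy precisely the consistency relations of a $\Lambda$-coalescent on the gene blocks, so a Hausdorff moment problem argument in the style of Pitman and Sagitov yields a Kingman coefficient $a_g\geq 0$ and a $\sigma$-finite measure $\nu_g$ on $(0,1]$ with $\int q^2\,\nu_g(dq)<\infty$, which by exchangeability do not depend on the chosen species nor on the surrounding configuration. Next, projecting onto the species partition shows $\mathcal{R}^s$ is a simple exchangeable coalescent; combined with the data of which genes merge inside each merging species, the species-direction consistency relations (adding a singleton species that does or does not participate) set up a moment problem in a hidden variable $p\in(0,1]$, peeling off the factor $p^{k}(1-p)^{b-k}$ and a species Kingman part $a_s$. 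Finally, for each species participating in a multiple merger, the gene-direction consistency relations produce a further moment problem in a hidden variable $q_i\in[0,1]$, giving the factor $\int\mu_i(dq)\,q^{l_i}(1-q)^{g_i-l_i}$, and exchangeability among the merging species forces, via a de Finetti-type argument, the $\mu_i$ to be conditionally i.i.d.\ given $p$, drawn from a probability kernel that we encode as the $\sigma$-finite measure $\nu_s$ on $(0,1]\times\mathcal{M}_1([0,1])$. The requirement that each $\mathcal{R}_{|n}$ have finite total jump rate out of every state then translates into the integrability conditions~\eqref{eq:condspecies1}--\eqref{eq:condgenes} (for instance \eqref{eq:condspecies1} from the rate of a bare merger of two species among $b$, and \eqref{eq:condspecies2} from the rate of a merger of two species accompanied by a single extra gene merger).

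For the bijectivity statement, injectivity is immediate since~\eqref{eq:2species} writes every $q_{b,k}(\mathbf{g},\mathbf{s},\mathbf{c})$ as an explicit functional of $(a_s,a_g,\nu_s,\nu_g)$, so the quadruplet determines all the chains $\mathcal{R}_{|n}$ and hence, by Lemma~\ref{lem:projMarkov} and Kolmogorov's extension theorem, the law of $\mathcal{R}$; conversely the uniqueness in the Hausdorff moment problem recovers the parameters from the rates. Surjectivity --- that every quadruplet satisfying~\eqref{eq:condspecies} and~\eqref{eq:condgenes} comes from an actual SNEC --- is supplied by the explicit Poisson point process construction of Section~\ref{sec:poisson}, where one checks, using these integrability conditions, that the restrictions to $[n]$ are well-defined continuous-time Markov chains with rates~\eqref{eq:2species}.

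The main obstacle is the coupled, infinite-dimensional moment problem underlying $\nu_s$: one must show not only that each per-species gene factor has the Hausdorff form $\int\mu_i(dq)\,q^{l_i}(1-q)^{g_i-l_i}$, but that the mixing measures $\mu_i$ for the distinct species merging in one event are exchangeable and conditionally i.i.d.\ given the species frequency $p$, so that the entire rate family is governed by a single measure on $(0,1]\times\mathcal{M}_1([0,1])$ rather than by a more complicated object; disentangling this from the bookkeeping of the degenerate cases of~\eqref{eq:hypothesesarray3} that gives rise to the $\mathcal{U}$ term is the technical core of the proof.
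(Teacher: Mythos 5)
Your plan is sound in outline but follows a genuinely different route from the paper's. You work directly with the consistency relations satisfied by the finite rate arrays $q_{b,k}(\mathbf{g},\mathbf{s},\mathbf{c})$ under the restriction maps $\mathcal{R}_{|n}=(\mathcal{R}_{|n+1})_{|n}$ and invoke Hausdorff moment problems in the style of Pitman--Sagitov. The paper instead first builds, for each initial state $\rho$, a $\sigma$-finite \emph{jump measure} $\mu_\rho$ on $\mathcal{N}_\infty$ via Carath\'eodory's extension theorem (using exactly the additivity relation you call consistency), pushes it forward to a hierarchically exchangeable $\sigma$-finite measure $\nu$ on infinite $\{0,1\}$-arrays satisfying \eqref{eq:nu_conditions}, and then applies a two-level de Finetti representation (Proposition \ref{prop:deFinetti}) to suitable finite restrictions of $\nu$. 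Your route has the advantage of staying entirely at the level of finite-dimensional rates; the paper's route buys a clean separation between the measure-theoretic existence step and the exchangeability step, and it makes the uniqueness of $(a_s,a_g,\nu_s,\nu_g)$ essentially automatic. Your identification of the degenerate bookkeeping behind \eqref{eq:hypothesesarray3} (the absorbed mass when exactly one gene participates, producing the $\1\{\mathbf{c}=\mathbf{0}\}\,\mathcal{U}$ term) matches the second line of \eqref{eq:rates_from_nu} in the paper, and your treatment of surjectivity via the Poisson construction of Section \ref{sec:poisson} is exactly what the paper does.

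The one place where your plan is materially underspecified is the step you yourself flag as the technical core: extracting a single $\sigma$-finite measure $\nu_s$ on $(0,1]\times\mathcal{M}_1([0,1])$ governing the joint species--gene mergers. A Hausdorff moment problem recovers measures on $[0,1]$ from moment sequences; here the hidden parameter is a pair $(p,\mu)$ with $\mu$ itself a probability measure, and moreover $\nu_s$ need not be finite (neither \eqref{eq:condspecies1} nor \eqref{eq:condspecies2} alone normalizes it), so de Finetti's theorem cannot be applied to $\nu$ directly. The paper's resolution is the increasing-limit device of Lemma \ref{lem:formulaZneq0}: restrict $\nu$ to the finite-measure events $B_n=\{\sum_{i=1}^{n}X_i\geq 2\}$, apply the hierarchical de Finetti representation there to get finite measures $\widetilde{\Lambda}_n$, and observe that these are increasing in $n$ because the asymptotic frequencies $F(\mathbf{Z}_n)=(p,\mu)$ do not depend on the first rows of the array; the monotone limit is $\nu_s$. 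A separate exchangeability-plus-$\sigma$-finiteness contradiction argument (Lemma \ref{lem:formulaZ=0}) is needed to show that on the complementary event $\{\mathbf{Z}_b=\mathbf{0}\}$ only the Kingman terms $a_s$, $a_g$ and the within-one-species measure $\nu_g$ survive. If you intend to avoid this machinery and argue purely through moment identities on the rates, you would still need some substitute for this disintegration over $\mathcal{M}_1([0,1])$ and for the finiteness truncation; as written, ``a de Finetti-type argument forces the $\mu_i$ to be conditionally i.i.d.'' names the conclusion rather than supplying the argument.
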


\begin{remark}{
    We will show the \emph{surjective} part of the theorem's last statement in Section \ref{sec:poisson}, using an explicit Poissonian construction.
    For now we prove the existence and uniqueness of the characteristics $(a_s,a_g,\nu_s,\nu_g)$.}
\end{remark}

\section{Proof of Theorem \ref{thm:charac}} \label{sec:proof}
Consider a SNEC process $\mathcal R=((\mathcal R^s(t),\mathcal R^g(t)),\; t\geq0)$ with values in $\mathcal N_\infty$ and recall its jump rates $q_n(\rho,\pi)$ defined in \eqref{rate}.
Also recall the alternative notation $q_{b,k}(\mathbf{g},\mathbf{s},\mathbf{c})$.
Here, $\mathbf{g}$ is a vector of size $b$ such that $\sum g_i=n$,
$\mathbf{s}$ is a vector having  the same size as $\mathbf{g}$ with coordinates in $\{0,1\}$ such that $\sum s_i=k$, and
$\mathbf{c}$ is a family of $|\mathbf{g}|$ elements denoted by $\mathbf{c}_1,\mathbf{c}_2\dots$ where $\mathbf{c}_i$ is a vector of $\{0,1\}^{g_i}$ if $s_i=1$ and $\mathbf{c}_i=\mathbf{0}$ if $s_i=0$.

\begin{lemma}
  For any initial value $\rho = (\rho_s,\rho_g)\in \mathcal{N}_{\infty}$, there exists a unique measure $\mu_{\rho}$ on $\mathcal{N}_{\infty}$ such that
  \begin{equation}\label{eq:mu_conditions}
  \mu_{\rho}(\{\rho\}) = 0 \quad \text{and} \quad \forall n\geq 1, \; \mu_{\rho}(\Pi_{|n} \neq \rho_{|n}) < \infty
  \end{equation}
  and such that the transition rate of the Markov chain $\mathcal{R}_{|n}$ from $\rho_{|n}$ to $\pi\in \mathcal{N}_{n}$ is given by
  \begin{equation}\label{eq:rates_from_mu}
  q_n(\rho,\pi) = \mu_{\rho}(\Pi_{|n} = \pi).
  \end{equation}
  Furthermore, for any permutation $\sigma:\N\to\N$,
  \begin{equation}\label{eq:exch_mu_rho}
  \mu_{\rho}(\sigma(\Pi)\in \cdot) = \mu_{\sigma(\rho)}(\Pi\in \cdot).
  \end{equation}
\end{lemma}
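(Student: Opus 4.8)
The plan is to build $\mu_\rho$ as a projective limit of the jump measures of the finite Markov chains $\mathcal R_{|n}$, whose existence is guaranteed by Lemma \ref{lem:projMarkov}. Fix $\rho=(\rho^s,\rho^g)\in\mathcal N_\infty$. For each $n$, the chain $\mathcal R_{|n}$ is a continuous-time Markov chain on the finite set $\mathcal N_n$, so it has well-defined finite jump rates $q_n(\rho,\pi)$ for $\pi\in\mathcal N_n$; these define a finite measure $\mu_\rho^{(n)}$ on $\mathcal N_n\setminus\{\rho_{|n}\}$ by $\mu_\rho^{(n)}(\{\pi\}) = q_n(\rho,\pi)$. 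The first step is to check that the family $(\mu_\rho^{(n)})_n$ is \emph{consistent} under restriction, i.e.\ that for $\pi\in\mathcal N_n$ and $m\le n$,
\[
\sum_{\substack{\pi'\in\mathcal N_n\\ \pi'_{|m}=\pi}} q_n(\rho,\pi') \;=\; q_m(\rho,\pi) \qquad\text{whenever }\pi\neq\rho_{|m},
\]
together with the companion identity that summing over all $\pi'$ with $\pi'_{|m}=\rho_{|m}$ (but $\pi'\neq\rho_{|n}$) gives the total rate out of $\rho_{|n}$ minus the total rate out of $\rho_{|m}$. This compatibility is exactly the statement that $\mathcal R_{|m}$ is the restriction of $\mathcal R_{|n}$ — equivalently the projective Markov property of Lemma \ref{lem:projMarkov} — read at the level of generators: a jump of $\mathcal R_{|n}$ that is visible in $[m]^2$ projects to the corresponding jump of $\mathcal R_{|m}$, and jumps invisible in $[m]^2$ project to ``no jump''. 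One extracts this from the standard identity $\frac1t\mathbb P_\rho(\mathcal R_{|m}(t)=\pi) = \sum_{\pi'_{|m}=\pi}\frac1t\mathbb P_\rho(\mathcal R_{|n}(t)=\pi')$ and letting $t\to0$.

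Given consistency, the second step is a Carath\'eodory/Kolmogorov-type extension. The sets $C_n^\pi := \{\Pi\in\mathcal N_\infty:\ \Pi_{|n}=\pi\}$, for $n\in\N$ and $\pi\in\mathcal N_n\setminus\{\rho_{|n}\}$, form (together with their finite unions) a semiring generating the Borel $\sigma$-algebra of the compact metrizable space $\mathcal N_\infty$; set $\mu_\rho(C_n^\pi):=q_n(\rho,\pi)$. Consistency makes this well-defined and finitely additive; $\sigma$-additivity on the semiring is automatic because each $C_n^\pi$ is compact and open, so any countable cover has a finite subcover and no genuine countable-additivity obstruction arises (this is the usual argument that pre-measures on the cylinder algebra of a product-like compact space extend). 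Thus $\mu_\rho$ extends to a measure on $\mathcal N_\infty$, supported off $\{\rho\}$ (since every $\pi\ne\rho$ lies in some $C_n^{\pi_{|n}}$ with $\pi_{|n}\ne\rho_{|n}$), with $\mu_\rho(\Pi_{|n}\neq\rho_{|n}) = \sum_{\pi\ne\rho_{|n}}q_n(\rho,\pi) = q_n(\rho,\rho_{|n}\to\cdot)<\infty$, the total jump rate of the finite chain. Uniqueness follows since two measures satisfying \eqref{eq:mu_conditions}--\eqref{eq:rates_from_mu} agree on the generating $\pi$-system $\{C_n^\pi\}$ and are $\sigma$-finite there.

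For the exchangeability relation \eqref{eq:exch_mu_rho}, the third step is to transport the identity through the finite approximations. Given a permutation $\sigma:\N\to\N$, statement \textit{iii)} of Definition \ref{snec} (or of Lemma \ref{lem:projMarkov}) gives $q_n(\rho,\pi)=q_n(\sigma(\rho),\sigma(\pi))$ for the finite chains — being careful, as in the proof of Lemma \ref{lem:projMarkov}, to replace $\sigma$ on the relevant finite block by a genuine permutation $\widetilde\sigma$ of $[N]$ with $N=\max_{i\le n}\sigma(i)$ agreeing with $\sigma$ on $[n]$, so that $\sigma(\pi)_{|n}=\widetilde\sigma(\pi_{|N})_{|n}$ and the finite-chain exchangeability applies. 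Hence $\mu_\rho(\sigma(\Pi)\in C_n^\pi)=\mu_\rho(C_n^{\sigma^{-1}(\pi)}\text{-type set})=q_n(\rho,\cdot)=q_n(\sigma(\rho),\cdot)=\mu_{\sigma(\rho)}(C_n^\pi)$ for all cylinders, and two $\sigma$-finite measures agreeing on the generating $\pi$-system coincide.

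The main obstacle is the consistency check in the first step: it is the one place where the \emph{simple} structure and the projective Markov property interact non-trivially, because one must be sure that when several finite-level transitions project to the same $\pi\in\mathcal N_m$ their rates add up correctly and that the ``leftover'' mass (transitions of $\mathcal R_{|n}$ invisible at level $m$) does not corrupt the definition. Everything after that is soft measure theory (extension from a compact-open cylinder semiring) plus a bookkeeping argument for permutations identical in spirit to the one already carried out in the proof of Lemma \ref{lem:projMarkov}.
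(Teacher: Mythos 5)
Your proposal is correct and follows essentially the same route as the paper: consistency of the finite-level jump rates (obtained from the projective Markov property by letting $t\to 0$ in the finite sum over cylinder events), Carath\'eodory extension on the semi-ring of compact-open cylinder sets where countable additivity reduces to finite additivity, and exchangeability of $\mu_\rho$ inherited from the exchangeable semigroup in the $t\to 0$ limit. The only cosmetic difference is that you transport the permutation identity through the finite chains (with the $\widetilde\sigma$ bookkeeping) whereas the paper applies \eqref{eq:defSNECexch} to the infinite process directly; both are valid.
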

Note that we write $\mu_{\rho}(\Pi \in A)$ instead of $\mu_{\rho}(A)$ because we implicitly work on the canonical space  $\mathcal{N}_\infty$ and we denote by $\Pi$ the generic element of $\mathcal{N}_\infty$.

\begin{proof}
  Let $n < m$. We first note that since $\mathcal{R}_{|m}$ and $\mathcal{R}_{|n} = (\mathcal{R}_{|m})_{|n}$ are Markov chains, the transition rates can be expressed, for any $\pi\in\mathcal{N}_n\setminus\{\rho_{|n}\}$,
  \begin{equation}\label{eq:rates_additivity}
  q_n(\rho,\pi) = \sum_{\pi' \in\mathcal{N}_{m}\,:\;\pi'_{|n} = \pi} q_m(\rho,\pi'). 
  \end{equation}
  Let us now check that this consistency property along with Carathéodory's extension theorem ensures us that there exists a measure $\mu_{\rho}$ on $\mathcal{N}_\infty\setminus\{\rho\}$ satisfying \eqref{eq:rates_from_mu}.
  
  Here the family $\mathcal{A} := \big\{\{\Pi_{|n} = \pi\} ,\; n\in\N, \pi\in \mathcal{N}_n\setminus \{\rho_{|n}\}\big\}\cup\{\emptyset\}$ clearly forms a semi-ring of subsets of $\mathcal{N}_\infty$, and it remains to check that the functional $\widetilde{\mu}:\mathcal{A} \to [0,+\infty]$, defined by
  \[ \widetilde{\mu}(\emptyset) := 0 \quad \text{and} \quad \widetilde{\mu}(\{\Pi_{|n} = \pi\}) := q_n(\rho, \pi), \]
  is a pre-measure.
  Equation \eqref{eq:rates_additivity} shows that $\widetilde{\mu}$ is finitely additive, and the only difficulty lies in understanding that $\widetilde{\mu}$ is countably additive.
  Now observe that the topology of $\mathcal{N}_\infty\setminus\{\rho\}$ is generated by $\mathcal{A}$, and that each of the non-empty sets in $\mathcal{A}$ is both open and closed (thus compact), because
  \[ \mathcal{N}_\infty \setminus \{\Pi_{|n} = \pi\} = \bigcup_{\rho\in\mathcal{N}_n\setminus\{\pi\}}  \{\Pi_{|n} = \rho\}. \]
  This implies that if $(A_n)_{n\geq 1}$ is a family of pairwise disjoint elements of $\mathcal{A}$ such that $\bigcup_{n}A_n\in\mathcal{A}$, then at most a finite number of the $A_n$ are non-empty (because since $\bigcup_{n}A_n$ is compact, there is a finite subcover), so countable additivity reduces to finite additivity.
  Therefore Carathéodory's extension theorem applies, hence the existence of a measure $\mu_{\rho}$ on $\mathcal{N}_\infty\setminus\{\rho\}$ satisfying \eqref{eq:rates_from_mu}.
  
  Considering $\mu_{\rho}$ as a measure on $\mathcal{N}_\infty$ such that $\mu_{\rho}(\{\rho\})=0$, we check easily \eqref{eq:mu_conditions} by noticing that 
  \[ \mu_{\rho}(\Pi_{|n} \neq \rho_{|n}) = \sum_{\pi\in\mathcal{N}_n\setminus\{\rho_{|n}\}} q_n(\rho,\pi) < \infty. \]
  Furthermore, for any $n$, $\pi\in\mathcal{N}_n\setminus\{\rho_{|n}\}$ and $\sigma:\N\to\N$ permutation, we have by the exchangeability property \eqref{eq:defSNECexch} of a SNEC, that
  \[ \mu_{\rho}(\sigma(\Pi)_{|n} =\pi) = \lim_{t\to 0}\frac{1}{t} \P_{\rho}(\sigma(\mathcal{R}(t))_{|n} = \pi) = \lim_{t\to 0}\frac{1}{t} \P_{\sigma(\rho)}(\mathcal{R}_{|n}(t) = \pi) = \mu_{\sigma(\rho)}(\Pi_{|n} =\pi), \]
  which proves that \eqref{eq:exch_mu_rho} holds on $\mathcal A$. Since the topology of $\mathcal{N}_\infty$ is generated by $\mathcal A$, the proof is complete.
\end{proof}

The latter lemma implies that there exists a family of exchangeable measures on $\cN_\infty$ characterizing (i.e.\ acting as an analog of a Markov kernel for continuous-space pure-jump Markov chains) the SNEC process $\mathcal{R}$.
Furthermore, since we are dealing with a simple coalescent, it is clear from the characterization \eqref{eq:rates_from_mu} that $\mu_{\rho}$ is simple in the sense that it is supported by all the possible bivariate partitions obtained from a simple coalescence from $\rho$. To put it simply,
\[ \mu_\rho\left (\cN_{\infty}\setminus\{\text{Coag}_{2}(\rho, \tilde\pi), \; \tilde\pi\in\widetilde{\mathcal{P}}(\rho)\}\right ) = 0. \]
The measure $\mu_\rho$ can be translated as a measure on arrays of random variables in $\{0,1\}$.
Informally, we can associate to each species in $\rho$ a 1 entry if it participates in the coalescence and a 0 entry otherwise. Inside the species participating to the coalescence event, we can also associate a 1 entry to the genes participating in the coalescence event and a 0 entry otherwise.
To tally with the definition of the SNEC we will need a certain partial exchangeability structure for this array.  
This picture can be formalized as follows.
Let $((X_i, (Y_{ij},\;j\in\mathbb N)),\;i\in\mathbb N)$ be an array of Bernoulli random variables and
denote by $Z_i$ the $i$-th line vector $(X_i, (Y_{ij},\;j\in\mathbb N))$.
We say that this array is \emph{hierarchically exchangeable} if
\begin{enumerate}[label=\textbf{(A\arabic*)}]
  \item the family $(Z_i,\;i\in\mathbb N)$ is exchangeable\label{A1};
  \item \label{A2} for any $i\in\mathbb N$, the family $(X_i,(Y_{ij},\;j\in \mathbb N))$ is invariant under any permutation over the $j$'s.
\end{enumerate}

We also naturally extend this definition to measures on the space $(\{0,1\}\times\{0,1\}^{\N})^{\N}$.
We say that such a measure $\mu$ is \textit{hierarchically exchangeable} if it is invariant both under the permutations of the rows, and the permutations within a row.

For an initial state $\rho=(\rho^{s}, \rho^{g})\in\mathcal{N}_\infty$ and an arrival state $\pi=\text{Coag}_{2}(\rho, \tilde\pi)\in\mathcal{N}_\infty$, with $\tilde\pi$ a simple bivariate partition $\tilde\pi=(\tilde\pi^{s}, \tilde\pi^{g})\in(\mathcal{P}'_{\infty})^{2}$, define the array $\mathbf{Z}(\rho,\pi)=(\mathbf{X}, \mathbf{Y}_1,\mathbf{Y}_2,\ldots)$ by
\begin{equation}\label{eq:def_Z}
\begin{aligned}
&X_i = 1 \text{ if the $i$-th block in $\rho^{s}$ has coalesced in $\pi$},\\
&Y_{ij} = 1 \text{ if the $I(i,j)$-th block in $\rho^{g}$ has coalesced in $\pi$},
\end{aligned}
\end{equation}
where $I(i,j):=k$ if the $k$-th block of $\rho^{g}$ is the $j$-th gene block of the $i$-th species block.

Now choose a state $\rho$ with an infinite number of species blocks, each containing an infinite number of gene blocks.
Let $\nu$ be the push-forward of $\mu_\rho$ by the application
\[ \pi \longmapsto \mathbf{Z}(\rho,\pi). \]
Then the exchangeability property of $\mu_{\rho}$ \eqref{eq:exch_mu_rho} implies that $\nu$ is a hierarchically exchangeable measure on $(\{0,1\}\times\{0,1\}^{\N})^{\N}$, and \eqref{eq:mu_conditions} implies that 
\begin{equation}\label{eq:nu_conditions}
\nu(\mathbf{Z}=\mathbf{0}) = 0,\quad\text{ and }\quad \nu\left (\sum_{i=1}^{n}X_i\geq 2 \text{ or } \exists i \leq n, \, \sum_{j=1}^{n}Y_{ij}\geq 2\right ) <\infty,
\end{equation}
where $\mathbf{0}$ denotes the null array on $(\{0,1\}\times\{0,1\}^{\N})^{\N}$.
Also, note that the application $(\mu_\rho, \,\rho\in \mathcal{N}_\infty) \mapsto \nu$ is one-to-one.
Indeed, we can conversely define for any $\mathbf{Z}$ and any nested partition $\rho \in\mathcal{N}_\infty$, the nested partition $C(\rho, \mathbf{Z}) \in \mathcal{N}_\infty$ obtained from $\rho$ by merging exactly the blocks that \emph{participate in the coalescence} where 
\begin{itemize}
  \item The $i$-th block of $\rho^s$ participates iff $X_i=1$;
  \item The $j$-th block in $\rho^g$ of the $i$-th block of $\rho^s$ participates iff $X_i=1$ and $Y_{ij}=1$.   
\end{itemize}
With this definition, $\mu_\rho$ is obtained as the push-forward of $\nu$ by the map $\mathbf{Z} \mapsto C(\rho,\mathbf{Z})$.

Now recall the alternative notation $q_{b,k}(\mathbf{g},\mathbf{s},\mathbf{c})$ for the transition rate of $\mathcal{R}_{|n}$ (where $n=\sum_ig_i$) from a nested partition with $b$ species blocks and $g_1,\dots,g_b$ gene blocks inside them, to a nested partition obtained by merging $k$ species blocks according to the vector $\mathbf{s}$ and gene blocks inside those species according to the array $\mathbf{c}$.
For any array $(\mathbf{g},\mathbf{s},\mathbf{c})\in\mathcal{C}$, note that \eqref{eq:rates_from_mu} translates in terms of our push-forward $\nu$ in the following way:
\begin{equation}\label{eq:rates_from_nu}
\begin{aligned}
q_{b,k}(\mathbf{g},\mathbf{s},\mathbf{c})& = \nu\bigl (\forall 1\leq i\leq b,\,  X_i = s_i,\text{ and }\forall 1\leq j \leq g_i, Y_{ij} = c_{ij}\bigr ) \\ 
&+ \1_{\{\mathbf{c}=\mathbf{0}\}} \nu \left (\forall 1\leq i\leq b,\,  X_i = s_i,\text{ and }\sum_{i=1}^{b} \sum_{j=1}^{g_i} Y_{ij} = 1 \right ).
\end{aligned}
\end{equation}
Indeed, the first line is quite straightforward and comes from our representation of coalescence events by those arrays $(\mathbf{g},\mathbf{s},\mathbf{c})\in\mathcal{C}$ (see Section \ref{sec:snec}) which basically means that blocks participating in a coalescence event are those associated with a $1$.
However in the case when $\mathbf{c}=\mathbf{0}$, there is an additional probability to observe the coalescence of species blocks associated to $\mathbf{s}$ with no coalescence of gene blocks (the case when all the $Y_{ij}$'s are 0 is included in the first term), which is when exactly one of the $Y_{ij}$'s is equal to $1$.
This gives rise to the second line of \eqref{eq:rates_from_nu}.

We now have to establish a de Finetti representation of hierarchically exchangeable arrays to express the measure of an event of the form $\{\forall 1\leq i\leq b,\,  X_i = s_i,\text{ and }\forall 1\leq j \leq g_i, Y_{ij} = c_{ij}\}$.
Note that we consider random measures in the following, but only on Borel spaces $(S,\mathcal{S})$ (i.e.\ spaces isomorphic to a Borel subset of $\R$ endowed with the Borel $\sigma$-algebra), which will enable us to use de Finetti's theorem \cite{kallenbergProbabilistic2005}.
For this we write $\mathcal{M}_1(S)$ for the space of probability measures on $S$, which is endowed with the $\sigma$-algebra generated by the maps $\mu \mapsto \mu(B)$ for all $B\in\mathcal{S}$.
The spaces $(S,\mathcal{S})$ that we consider will be for instance $[0,1]$ with its Borel sets or $\{0,1\}^{\N}$ equipped with the product $\sigma$-algebra, which are clearly Borel spaces.

\begin{proposition}\label{prop:deFinetti}
  Let $\mathbf{Z}=(Z_i,\;i\in\mathbb N)=((X_i, (Y_{ij},\;j\in\mathbb N)),\;i\in\mathbb N)$ be a hierarchically exchangeable array (with variables in $\{0,1\}$). 
  Then there exists a unique probability measure $\Lambda$ on $E'=[0,1]\times\mathcal M_1([0,1])\times\mathcal M_1([0,1])$ (and we will write any element $\mu$ of $E'$ as $(p,\mu_0,\mu_1)$) such that for all $n\geq 1$
  \begin{align}\label{eq:deFinetti}
  &\P(X_i=x_i, Y_{ij}=y_{ij},\;i,j\in[n])\nonumber\\
  &\begin{aligned}
  &=\int_{E'} \Lambda(d\mu)\prod_{i=1}^n\Bigg[(p\1_{\{x_i=1\}}+(1-p)\1_{\{x_i=0\}}) \\
  & \hspace{10em} \int_{[0,1]}\mu_{x_i}(dq_i)\prod_{j=1}^n(q_i\1_{\{y_{ij}=1\}}+(1-q_i)\1_{\{y_{ij}=0\}})\Bigg].
  \end{aligned}
  \end{align}
\end{proposition}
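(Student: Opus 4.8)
The plan is to prove this by a two-step de Finetti argument, exploiting the hierarchical structure \ref{A1}--\ref{A2}. First I would apply de Finetti's theorem to the outer sequence $(Z_i,\;i\in\mathbb N)$ of rows, which by \ref{A1} is exchangeable. Since each row $Z_i$ takes values in the Borel space $\{0,1\}\times\{0,1\}^{\mathbb N}$, de Finetti's theorem provides a random probability measure $\xi$ on that space such that, conditionally on $\xi$, the rows $Z_i$ are i.i.d.\ with law $\xi$. Let $\Lambda_0$ denote the law of $\xi$ on $\mathcal M_1(\{0,1\}\times\{0,1\}^{\mathbb N})$; then
\[
\P(Z_i\in A_i,\;i\in[n]) = \int \Lambda_0(d\xi)\prod_{i=1}^n \xi(A_i).
\]

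The second step is to understand the structure of a \emph{typical} directing measure $\xi$. The key point is that property \ref{A2} must be inherited by $\xi$ for $\Lambda_0$-almost every $\xi$: since the conditional law of each row given $\xi$ is $\xi$ itself, and the unconditional law is invariant under permutations of the $j$'s within a fixed row, a standard argument (conditioning on $\xi$ and using that the within-row permutation acts only on that row) shows that $\Lambda_0$-a.s.\ the measure $\xi$ is itself invariant under permutations of the $\{0,1\}^{\mathbb N}$-coordinates. Now I would decompose such a $\xi$: writing $p = \xi(X=1)$, and letting $\xi_1$ (resp.\ $\xi_0$) be the conditional law of $(Y_j,\;j\in\mathbb N)$ given $X=1$ (resp.\ $X=0$) under $\xi$ — defined arbitrarily, say as $\delta_{\mathbf 0}$, when the conditioning event has $\xi$-probability zero — the within-row exchangeability of $\xi$ forces $\xi_0$ and $\xi_1$ to be exchangeable measures on $\{0,1\}^{\mathbb N}$. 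Applying de Finetti's theorem once more to each of $\xi_0,\xi_1$ yields mixing measures $\mu_0,\mu_1\in\mathcal M_1([0,1])$ with $\xi_{x}(Y_j=y_j,\;j\in[n]) = \int_{[0,1]}\mu_x(dq)\prod_j(q\1_{\{y_j=1\}}+(1-q)\1_{\{y_j=0\}})$. Thus each $\xi$ is encoded by a triple $(p,\mu_0,\mu_1)\in E'$, and pushing $\Lambda_0$ forward along $\xi\mapsto(p,\mu_0,\mu_1)$ gives a probability measure $\Lambda$ on $E'$ for which \eqref{eq:deFinetti} holds by Fubini. Uniqueness of $\Lambda$ follows because the integrals in \eqref{eq:deFinetti}, ranging over all finite patterns $(x_i,y_{ij})$, determine the moments $\int \Lambda(d\mu)\,p^a(1-p)^b\prod(\ldots)$ of all coordinates, and these in turn determine $\Lambda$ on the Borel $\sigma$-algebra of $E'$ by a monotone-class / Stone--Weierstrass argument (the functions $\mu\mapsto p$, $\mu\mapsto\int q^k\,\mu_x(dq)$ separate points of $E'$ and generate its $\sigma$-algebra).

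The main obstacle I anticipate is the measurability bookkeeping in the second step: one must check that the maps $\xi\mapsto p$, $\xi\mapsto\xi_0$, $\xi\mapsto\xi_1$ (conditional-law extractions) are measurable on $\mathcal M_1(\{0,1\}\times\{0,1\}^{\mathbb N})$, so that the push-forward $\Lambda$ is well-defined, and that the almost-sure within-row exchangeability of $\xi$ genuinely follows from \ref{A2} rather than merely the unconditional statement. Both are routine given that we are on Borel spaces — the conditional law extraction is a measurable operation by the existence of regular conditional distributions on Borel spaces, and the a.s.\ symmetry of $\xi$ follows from the fact that for a fixed within-row permutation $\tau$ acting on row $i_0$, the array $(\ldots,\tau(Z_{i_0}),\ldots)$ has the same law, hence the same directing measure $\xi$, while conditionally on $\xi$ its $i_0$-th row has law $\tau_*\xi$; comparing gives $\tau_*\xi=\xi$ a.s., and a countable union over $\tau$ and $i_0$ finishes it. Everything else is a direct iteration of de Finetti's theorem, and I would present it in exactly the order above.
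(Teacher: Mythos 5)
Your proposal is correct and follows essentially the same route as the paper: a two-fold application of de Finetti's theorem on Borel spaces, with the resulting triple $(p,\mu_0,\mu_1)$ obtained as a measurable push-forward of the outer directing measure. The only difference is cosmetic --- you apply the outer (row-level) de Finetti first and then decompose the directing measure $\xi$, whereas the paper first analyses a single row under \ref{A2} and then mixes over rows --- and your version is in fact slightly more explicit about the one point the paper glosses over, namely that the directing measure inherits the within-row exchangeability almost surely.
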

\begin{proof}
  Let us first observe that if a sequence $(X,(Y_{j},\;j\in \mathbb N))$ satisfies Hypothesis \ref{A2}, then, conditional on $X=x\in\{0,1\}$, the sequence $(Y_{j},\;j\in \mathbb N)$ is exchangeable.
  We can thus apply de Finetti's theorem:
  conditional on $X=x$ there is a unique probability measure $\mu_x$ giving the distribution of the asymptotic frequency $q$ of the variables ${(Y_{j},\;j\in \mathbb N)}$, and conditional on $q$ they are i.i.d. Bernoulli with parameter $q$. 
  This implies that, for any $\{0,1\}$-valued finite sequence $(x, y_1, y_2,\dots, y_k)$,
  \begin{align}\label{eq:cddeF}
  &\P(X=x, Y_1=y_1, \dots, Y_k = y_k) \nonumber \\
  &=\P(X=x)\int_{[0,1]}\mu_{x}(dq)\prod_{j=1}^{k}(q\1_{\{y_{j}=1\}}+(1-q)\1_{\{y_j=0\}}).
  \end{align}
  Also observe that since $X$ is binary, there exists $p\in[0,1]$ such that $\P(X=x)=p\1_{\{x=1\}}+(1-p)\1_{\{x=0\}}$.
  
  As a consequence of Hypothesis \ref{A1}, we can apply once again de Finetti's theorem: 
  there exists a unique law $\tilde\Lambda$ on $\mathcal{M}_1(\{0,1\}^{\mathbb N})$ such that the law of $(Z_i,\;i\in\N)$ equals 
  $\int_{\mathcal{M}_1(\{0,1\}^{\mathbb N})}\tilde\Lambda(d\tilde\mu)\bigotimes_{i\geq1}\tilde\mu$.
  Furthermore it has been seen that $\tilde\mu$ can be expressed as in \eqref{eq:cddeF}.
  
  Now let $F$ stand  for the measurable mapping such that
  $F(\tilde\mu)=(p,\mu_0,\mu_1)\in E'$ and let $\Lambda$ be the push-forward of $\tilde\Lambda$ by the mapping $F$. 
  We obtain that if $A$ and $(B_i,\;i\in A)$ are finite subsets of $\N$, then
  \begin{align*}
  &\P(X_i=x_i, Y_{ij_i}=y_{ij_i},\;i\in A,j_i\in B_i)\\
  &=\int_{E'} \Lambda(d\mu)\prod_{i\in A}\Bigg[(p\1_{\{x_i=1\}}+(1-p)\1_{\{x_i=0\}}) \\ 
  & \hspace{10em} \int_{[0,1]}\mu_{x_i}(dq_i)\prod_{j_i\in B_i}(q_i\1_{\{y_{ij_i}=1\}}+(1-q_i)\1_{\{y_{ij_i}=0\}})\Bigg].
  \end{align*}
  This ends the proof.
\end{proof}

This result is almost enough to express \eqref{eq:rates_from_nu} but one has to be careful because the measure $\nu$ might not be finite.
However, it is $\sigma$-finite because by \eqref{eq:nu_conditions},
\[ 
\nu = \lim_{n\to\infty} \uparrow \nu\left (\left \{ \sum_{i=1}^{n}X_i\geq 2 \text{ or } \exists i \leq n, \, \sum_{j=1}^{n}Y_{ij}\geq 2 \right \}\cap \; \cdot\;\right ),
\]
and those events have finite measure.
The idea behind the following lemma is to make use of those events and hierarchical exchangeability to express $\nu$ as a limit of finite measures which, thanks to an application of Proposition \ref{prop:deFinetti}, have a representation under the form $\eqref{eq:deFinetti}$.

Let us introduce some notation that will enable us to make this argument formal.
For a fixed vector $(\mathbf{g},\mathbf{s},\mathbf{c})\in \mathcal{C}$, such that $|\mathbf{g}| = b$, let us examine the event 
\[ A=A(\mathbf{g},\mathbf{s},\mathbf{c}):=\{\forall 1\leq i\leq b,\,  X_i = s_i,\text{ and }\forall 1\leq j \leq g_i, Y_{ij} = c_{ij}\} \]
and its measure $\nu(A)$.
Let us define, for all $n\geq 1$ the shifted random array
\begin{equation}\label{eq:shift_array}
\mathbf{Z}_n := (X_{i+n}, Y_{(i+n)j},\;i,j \in\N).
\end{equation}
We decompose naturally $A = (A\cap\{\mathbf{Z}_b \neq\mathbf{0}\}) \cup (A\cap\{\mathbf{Z}_b =\mathbf{0}\})$, where $b=|\mathbf{g}|$.

{Recall that the array $\mathbf{Z}$ encodes which species blocks and which gene blocks are participating in a coalescence.
  Therefore the event $A\cap\{\mathbf{Z}_b \neq 0\}$ indicates that there are merging species blocks outside of the first $b$ blocks.
  In fact we will see that this implies that such merging blocks are infinitely many (a random proportion $p$ of them), and within each of these blocks, a random proportion $q$ of gene blocks are also participating in the coalescence event.
  The following technical lemma makes this statement rigorous.}

\begin{lemma}\label{lem:formulaZneq0}
  For an array $(\mathbf{g},\mathbf{s},\mathbf{c})$ satisfying assumptions \eqref{eq:hypothesesarray1} and \eqref{eq:hypothesesarray2}, there exists a unique measure $\nu_s$ on $E=(0,1]\times\mathcal{M}([0,1])$ such that
  \begin{equation}\label{eq:formulaZneq0}
  \nu(A\cap\{\mathbf{Z}_b\neq \mathbf{0}\}) = \int_{E} \nu_s(dp, d\mu)p^k(1-p)^{b-k}\prod_{i\,:\,s_i=1}\int_{[0,1]}\mu(dq) q^{l_i}(1-q)^{g_i-l_i},
  \end{equation}
  where $b:=|\mathbf{g}|$, $k:=\sum_i s_i$ and $l_i := \sum_j c_{ij}$.
  Moreover, $\nu_s$ satisfies \eqref{eq:condspecies}.
\end{lemma}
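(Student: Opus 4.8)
The plan is to reduce the study of $\nu$ restricted to $\{\mathbf{Z}_b\neq\mathbf{0}\}$ to a sequence of \emph{finite} hierarchically exchangeable measures, apply the de Finetti representation of Proposition \ref{prop:deFinetti} to each of them, and pass to the limit. First I would fix $(\mathbf{g},\mathbf{s},\mathbf{c})$ and, for $N\geq b$, consider the event $B_N := \{\sum_{i=b+1}^{N} X_i \geq 2\}$, which by \eqref{eq:nu_conditions} has finite $\nu$-measure, and which increases (as $N\to\infty$) to $\{\mathbf{Z}_b\neq\mathbf{0}\}$ up to a $\nu$-null set — here one uses that on $\{\mathbf{Z}_b\neq\mathbf{0}\}$ there must be at least one species block participating outside the first $b$, and hierarchical exchangeability together with the already-established structure (rows are exchangeable, so a single participating row forces, by a Hewitt--Savage/de Finetti argument, infinitely many, hence $\geq 2$ among some finite window) rules out the event that exactly finitely many $X_i$ with $i>b$ equal $1$. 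Then $\nu(A\cap\{\mathbf{Z}_b\neq\mathbf{0}\}) = \lim_N \uparrow \nu(A\cap B_N)$ by monotone convergence.

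Next, on each $B_N$ the measure $\nu(\,\cdot \cap B_N)$ is finite and still hierarchically exchangeable after an appropriate symmetrization (or one simply works with the conditional probability measure $\nu(\,\cdot\mid B_N)$), so Proposition \ref{prop:deFinetti} yields a probability measure $\Lambda_N$ on $E' = [0,1]\times\mathcal{M}_1([0,1])\times\mathcal{M}_1([0,1])$ representing it as in \eqref{eq:deFinetti}. I would then evaluate the representation on the cylinder $A(\mathbf{g},\mathbf{s},\mathbf{c})$: the first $b$ rows contribute $\prod_{i\le b}\big(p\1_{x_i=1}+(1-p)\1_{x_i=0}\big)\int\mu_{x_i}(dq_i)\prod_{j\le g_i}(q_i\1_{y_{ij}=1}+(1-q_i)\1_{y_{ij}=0})$; since $x_i=s_i$ and $y_{ij}=c_{ij}$, the rows with $s_i=1$ give $p\int\mu_1(dq)q^{l_i}(1-q)^{g_i-l_i}$ and the rows with $s_i=0$ give $(1-p)\int\mu_0(dq)(1-q)^{g_i}$. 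Crucially, on $\{\mathbf{Z}_b\neq\mathbf{0}\}$ there is participation among rows $>b$, which forces $p>0$; and the rows with $s_i=0$, being rows that do \emph{not} participate, are (in the limit) represented by $\mu_0 = \delta_0$, so $\int\mu_0(dq)(1-q)^{g_i}=1$ and these factors disappear. Absorbing $p$ and the $\mu_0$-information into a single measure $\nu_s$ on $E=(0,1]\times\mathcal{M}_1([0,1])$ (set $\nu_s(dp,d\mu) := \lim_N \text{mass}_N\cdot \Lambda_N(dp, d\mu_1)$ restricted to $p>0$, after an appropriate rescaling tracking $\nu(B_N)$) gives exactly \eqref{eq:formulaZneq0} with $\mu=\mu_1$, where I keep $k=\sum s_i$, $l_i=\sum c_{ij}$, and the factor $(1-p)^{b-k}$ comes from the $k$ non-participating rows among the first $b$ contributing $p^0(1-p)$ each — wait, more precisely the $b-k$ rows with $s_i=0$ each contribute $(1-p)$, giving $(1-p)^{b-k}$, and the $k$ rows with $s_i=1$ each contribute $p$, giving $p^k$, which is the stated formula. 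Uniqueness of $\nu_s$ follows because the functionals $p^k(1-p)^{b-k}\prod_i\int\mu(dq)q^{l_i}(1-q)^{g_i-l_i}$, as $(\mathbf{g},\mathbf{s},\mathbf{c})$ ranges over $\mathcal{C}$, separate measures on $E$ (they include, e.g., all moments in $p$ and, for each fixed $p$-power, a moment-determining family in $\mu$), combined with the $\sigma$-finiteness decomposition of $\nu$.

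Finally, to see that $\nu_s$ satisfies \eqref{eq:condspecies1} and \eqref{eq:condspecies2}: the finiteness of $\nu(B_N)$, expressed through the representation, bounds $\int_E \nu_s(dp,d\mu)\big(1-(1-p)^{N-b}-(N-b)p(1-p)^{N-b-1}\big)$, and letting the test event instead be $\{X_i=X_{i'}=1\}$ for two specific indices $i<i'$ in $[N]\setminus[b]$ and summing appropriately, one extracts $\int_E\nu_s(dp,d\mu)\,p^2<\infty$ by a standard argument (the same one showing $\int x^2\nu(dx)<\infty$ for $\Lambda$-coalescents from finiteness of pairwise-merger rates). For \eqref{eq:condspecies2}, one instead tests on the event that species block $b+1$ participates ($X_{b+1}=1$) and two of its gene blocks participate ($Y_{b+1,1}=Y_{b+1,2}=1$), whose $\nu$-measure is finite by \eqref{eq:nu_conditions}; the representation gives this measure as $\int_E\nu_s(dp,d\mu)\,p\int_{[0,1]}\mu(dq)q^2$ (up to the harmless factor from the remaining structure), whence the claim.

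The main obstacle I anticipate is the rigorous passage to the limit in $N$: one must show that the finite representations $\Lambda_N$ (suitably rescaled by $\nu(B_N)$) converge — at least in the sense needed to identify a limiting measure $\nu_s$ on $E$ — and, hand in hand with this, that the non-participating rows genuinely collapse to $\mu_0=\delta_0$ in the limit and that no mass escapes to $p=0$ (which would be inconsistent with $\{\mathbf{Z}_b\neq\mathbf{0}\}$). Handling this cleanly likely requires working on a fixed large but finite window, writing explicit identities rather than limits, and only passing to the limit at the very end via monotone convergence applied to the explicit polynomial-in-$p$ integrands — exactly the structure that makes the consistency relation \eqref{eq:rates_additivity} usable.
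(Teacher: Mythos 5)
Your overall architecture is the same as the paper's: approximate $\{\mathbf{Z}_b\neq\mathbf{0}\}$ from inside by the events $B'_N=\{\sum_{i=b+1}^{N}X_i\geq 2\}$ (justified, as you say, by the zero--infinity dichotomy for the number of participating rows under the finite hierarchically exchangeable measure $\nu(A\cap\{\mathbf{Z}_b\in\cdot\})$), apply the de Finetti representation of Proposition \ref{prop:deFinetti} to finite pieces, and pass to an increasing limit. But there is a genuine gap at the central step: the measure $\nu(\,\cdot\cap B'_N)$, viewed on the whole array, is \emph{not} hierarchically exchangeable --- the event $B'_N$ singles out rows $b+1,\dots,N$ --- and neither ``symmetrization'' nor replacing it by the conditional measure $\nu(\,\cdot\mid B'_N)$ repairs this, so Proposition \ref{prop:deFinetti} cannot be invoked on it as you describe (in particular your evaluation of the representation ``on the first $b$ rows'' has no licence). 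The paper's fix is a row shift: by exchangeability of $\nu$ one has $\nu(A\cap B'_N)=\nu(A_N\cap B_N)$, where $A_N$ is the translate of $A$ to rows $N+1,\dots,N+b$ and $B_N=\{\sum_{i=1}^{N}X_i\geq2\}$; then the finite measure $\nu(B_N\cap\{\mathbf{Z}_N\in\cdot\})$, with $\mathbf{Z}_N$ the shifted array of \eqref{eq:shift_array}, \emph{is} hierarchically exchangeable because $B_N$ and $\mathbf{Z}_N$ involve disjoint sets of rows, and de Finetti applies to it. This device is not cosmetic: it is what legitimizes every subsequent identity.

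Two further points you flag but do not resolve. First, the convergence of the representing measures: the clean argument is that the de Finetti directing pair $(p,\mu)=F(\mathbf{Z}_N)$ is an asymptotic functional of the array, unchanged when one more row is absorbed into the conditioning event, so $\widetilde\Lambda_N=\nu(B_N\cap\{F(\mathbf{Z}_N)\in\cdot\})$ is an \emph{increasing} sequence of measures (since $B_N\subset B_{N+1}$ and $F(\mathbf{Z}_{N+1})=F(\mathbf{Z}_N)$), and monotone convergence does the rest; no rescaling by $\nu(B'_N)$ is needed or wanted. Second, your uniqueness argument via ``the functionals separate measures'' is unsubstantiated for $\sigma$-finite (possibly infinite) measures on $E$; the direct route is to observe that any $\nu_s'$ satisfying \eqref{eq:formulaZneq0} forces $\widetilde\Lambda_N(dp,d\mu)=\bigl(1-(1-p)^N-Np(1-p)^{N-1}\bigr)\,\nu_s'(dp,d\mu)$ and to let $N\to\infty$. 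Your derivation of \eqref{eq:condspecies} from \eqref{eq:nu_conditions} via the events $\{X_1=X_2=1\}$ and $\{X_1=Y_{1,1}=Y_{1,2}=1\}$ is correct and matches the paper.
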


\begin{proof}
  We define some events that will be used to express $\nu(A\cap\{\mathbf{Z}_b \neq\mathbf{0}\})$.
  \begin{gather*}
  A_n=A_n(\mathbf{g},\mathbf{s},\mathbf{c}):=\{\forall 1\leq i\leq b,\,  X_{i+n} = s_i,\text{ and }\forall 1\leq j \leq g_i, Y_{(i+n)j} = c_{ij}\} \\
  {B_n := \left\{ \sum_{i=1}^{n}X_i \geq 2 \right\}}\\
  {B'_n = B'_n(\mathbf{g},\mathbf{s},\mathbf{c}):= \left\{ \sum_{i=b+1}^{b+n}X_i \geq 2 \right\}.}
  \end{gather*}
  {Note that $(\mathbf{g},\mathbf{s},\mathbf{c})$ satisfies \eqref{eq:hypothesesarray1} and \eqref{eq:hypothesesarray2}, so we have ${A\subset \{\sum_{i=1}^{m}X_i \geq 2 \text{ or }\sum_{i,j=1}^{m}Y_{ij} \geq 2\}}$ for $m=\max(b,g_1,\ldots g_b)$.
    Now because $\nu$ satisfies \eqref{eq:nu_conditions}, necessarily $\nu(A)<\infty$, which implies that
    \[
    \nu(A\cap\{\mathbf{Z}_b \in \cdot\})
    \]
    is a finite hierarchically exchangeable measure on $(\{0,1\}\times\{0,1\}^{\N})^{\N}$.
    The de Finetti representation (Proposition \ref{prop:deFinetti}) implies that on the event $A$, $\mathbf{Z}_b$ is either $\mathbf{0}$, or has an infinite number of entries with value $1$.
    In particular, 
    \[ A\cap\{\mathbf{Z}_b \neq \mathbf{0} \} = A\cap\{\mathbf{Z}_b \text{ has at least two entries at }1\}, \]
    therefore, there is the equality
    \[
    A\cap\{\mathbf{Z}_b \neq\mathbf{0}\} = \bigcup_{n\geq 1} A\cap B'_n,
    \]
    where the union is increasing.}
  Therefore,
  \begin{align*}
  \nu(A\cap\{\mathbf{Z}_b \neq\mathbf{0}\}) &= \lim_{n\to\infty}\nu(A\cap B'_n). \\
  &= \lim_{n\to\infty} \nu(B_n \cap A_n),
  \end{align*}
  where we used the hierarchical exchangeability of $\nu$ to get the second equality.
  Now we know from \eqref{eq:nu_conditions} and because $\nu$ is exchangeable that the measure
  \[ \nu(B_n \cap \{\mathbf{Z}_n \in \cdot \}) \]
  is a finite hierarchically exchangeable measure on $(\{0,1\}\times\{0,1\}^{\N})^{\N}$.
  Because it is finite we can apply Proposition \ref{prop:deFinetti} to deduce that there exists a finite measure $\Lambda_n$ on $E' = (0,1]\times\mathcal{M}([0,1])^2$ such that
  \begin{align*}
  \nu(B_n \cap A_n)& = \int_{E'} \Lambda_n(dp, d\mu_0,d\mu_1)\prod_{i=1}^{b}\Bigg[(p\1_{\{s_i=1\}}+(1-p)\1_{\{s_i=0\}}) \\
  &\hspace{12.5em} \int_{[0,1]}\mu_{s_i}(dq_i)\prod_{j=1}^{g_i}(q_i \1_{\{c_{ij}=1\}}+(1-q_i)\1_{\{c_{ij}=0\}})\Bigg].
  \end{align*}
  We can simplify this expression since $\nu$ is supported by the set $\{\forall i \in \N, \, X_i = 0 \Rightarrow \forall j\in \N, Y_{ij}=0 \}.$
  This implies that $\Lambda_n$-a.e.\ the measure $\mu_0$ is $\delta_0$ the Dirac measure at $0$.
  Therefore we write $\widetilde{\Lambda}_n$ for the push forward measure on $E:=(0,1]\times\mathcal{M}([0,1])$ of $\Lambda_n$ by the application $(p,\mu_0,\mu_1)\mapsto (p,\mu_1)$.
  We now have
  \begin{equation}\label{eq:p_et_mu}
  \nu(B_n \cap A_n) = \int_{E} \widetilde{\Lambda}_n(dp, d\mu)p^k(1-p)^{b-k}\prod_{i\,:\,s_i=1}\int_{[0,1]}\mu(dq) q^{l_i}(1-q)^{g_i-l_i} . 
  \end{equation}
  To be able to pass to the limit, let us check that the sequence of measures $(\widetilde{\Lambda}_n)$ is increasing.
  Indeed, recall that $\Lambda_n$ is obtained from two applications of de Finetti's theorem to the exchangeable array $\mathbf{Z}_n$, so the asymptotic parameters $p$ and $\mu$ appearing in \eqref{eq:p_et_mu} are a deterministic, measurable functional of $\mathbf{Z}_n$.
  Let us write this functional $F(\mathbf{Z}_n)=(p,\mu)$, so now $\widetilde{\Lambda}_n$ is simply the measure
  \[ \nu(B_n\cap \{F(\mathbf{Z}_n)\in \cdot \}). \]
  But $p$ and $\mu$ are asymptotic quantities of the array $\mathbf{Z}_n$, which do not depend on the first row of $\mathbf{Z}_n$, so $F(\mathbf{Z}_{n+1})=F(\mathbf{Z}_{n})$ and we have
  \begin{align*}
  \widetilde{\Lambda}_n &= \nu(B_n\cap \{F(\mathbf{Z}_n)\in \cdot \})\\
  &= \nu(B_n\cap \{F(\mathbf{Z}_{n+1})\in \cdot \})\\
  &\leq \nu(B_{n+1}\cap \{F(\mathbf{Z}_{n+1})\in \cdot \})\\
  &= \widetilde{\Lambda}_{n+1},
  \end{align*}
  where the passage from the second to the third line is simply because $B_n\subset B_{n+1}$.
  Therefore there is a limiting measure $\nu_{s}$ on $E$ such that
  \begin{equation*}
  \nu(A\cap\{\mathbf{Z}_b\neq \mathbf{0}\}) = \lim_{n\to\infty}\nu(B_n \cap A_n) = \int_{E} \nu_s(dp, d\mu)p^k(1-p)^{b-k} \!\!\! \prod_{i\,:\,s_i=1}\int_{[0,1]}\mu(dq) q^{l_i}(1-q)^{g_i-l_i},
  \end{equation*}
  so we recover \eqref{eq:formulaZneq0}.
  {To prove the uniqueness of this measure, consider any measure $\nu'_s$ on $E$ such that \eqref{eq:formulaZneq0} holds.
    Then we have simply
    \[
    \tilde\Lambda_n(dp,d\mu) = \nu(B_n\cap \{F(\mathbf{Z}_n)\in (dp,d\mu) \}) = (1-(1-p)^n-np(1-p)^{n-1})\nu'_s(dp,d\mu),
    \]
    where the first equality is by definition and the second because we assumed that \eqref{eq:formulaZneq0} holds for $\nu'_s$.
    Taking limits on both sides yields
    \[
    \nu_s(dp,d\mu) = \nu'_s(dp,d\mu).
    \] }
  It remains to prove \eqref{eq:condspecies}.
  Note that the condition \eqref{eq:nu_conditions} implies that
  \begin{equation*}
  \nu(X_1=X_2=1)<\infty \quad \text{ and } \quad \nu(X_1=1, Y_{1,1}=Y_{1,2}=1) < \infty. 
  \end{equation*}
  Translating these conditions with the formula \eqref{eq:formulaZneq0}, we recover exactly \eqref{eq:condspecies}.
\end{proof}

Let us now examine $\nu(A\cap\{\mathbf{Z}_b = \mathbf{0}\})$.
{Recall that the event $A\cap\{\mathbf{Z}_b =0\}$ indicates that there are no other merging species blocks than those within the first $b$ blocks.
  The next lemma shows that this implies that we are either in a Kingman-type coalescence (a~pair of species blocks are merging, occurring at rate $a_s$, or a pair of gene blocks within one species are merging, occurring at rate $a_g$), or in a multiple gene coalescence within a single species block (in which case a random proportion $q$ of gene blocks are merging).}

{The key idea is to use exchangeability and the $\sigma$-finiteness property \eqref{eq:nu_conditions} of the measure $\nu$ to show by contradiction that $\nu(A\cap\{\mathbf{Z}_b = \mathbf{0}\})$ is zero in certain cases.}

\begin{lemma}\label{lem:formulaZ=0}
  For an array $(\mathbf{g},\mathbf{s},\mathbf{c})$ satisfying assumptions \eqref{eq:hypothesesarray1} and \eqref{eq:hypothesesarray2}, there exist unique real numbers $a_s, a_g \geq 0$ and a unique measure $\nu_g$ on $(0,1]$ satisfying \eqref{eq:condgenes} such that
  \begin{equation}\label{eq:formulaZ=0}
  \begin{aligned}
  \nu(A\cap\{\mathbf{Z}_b = \mathbf{0}\})&= a_s \1{\left \{k = 2, \mathbf{c}=\mathbf{0}\right \}}\\
  &+ \1{\{k=1\}}\left (a_g \1{\{l_I = 2\}} + \int_{(0,1]}\nu_g(dq)q^{l_I}(1-q)^{g_I-l_I}  \right ),
  \end{aligned}
  \end{equation}
  where $b:=|\mathbf{g}|$, $k:=\sum_i s_i$, $l_i := \sum_j c_{ij}$ and in the case $k=1$, $I$ is the unique index in $\{1, 2,\ldots,b\}$ such that $s_I=1$.
\end{lemma}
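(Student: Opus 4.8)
The plan is to split the computation according to the value $k:=\sum_i s_i$. First, $k\geq 1$ always: if $k=0$ then \eqref{eq:hypothesesarray1} forces $\mathbf c=\mathbf 0$, so $\sum_{i,j}c_{ij}=0<2$, and \eqref{eq:hypothesesarray2} would then demand $k\geq 2$, a contradiction. Throughout I would use only two features of $\nu$: its hierarchical exchangeability, and the $\sigma$-finiteness \eqref{eq:nu_conditions}, which says in particular that $\{X_1=X_2=1\}$ and $\{\exists i\leq n:\sum_{j\leq n}Y_{ij}\geq 2\}$ carry finite $\nu$-mass for every $n$. Note that on $A\cap\{\mathbf Z_b=\mathbf 0\}$ the set of participating species is exactly $S:=\{i:s_i=1\}$, of cardinality $k$, and, since $X_l=0\Rightarrow Y_{lj}=0$ for $\nu$-a.e.\ array, no gene outside $S$ participates. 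The crux will be to show that a \emph{finite} species merger cannot be accompanied by any gene merger, except for the isolated Kingman species pair.

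I would first handle the cases where \eqref{eq:formulaZ=0} asserts the value $0$. If $k\geq 3$, fix two elements $a,b\in S$; for every $k$-element set $S'$ containing $a,b$ one has $\nu(\{\text{participating species}=S'\})=\nu(\{\text{participating species}=S\})$ by exchangeability over rows, these events are pairwise disjoint and all contained in $\{X_a=X_b=1\}$, which has finite mass; as there are infinitely many such $S'$, the common mass is $0$, hence $\nu(A\cap\{\mathbf Z_b=\mathbf 0\})=0$. For $k=2$, say $S=\{1,2\}$, I claim $\nu(\{\text{participating species}=\{1,2\}\}\cap\{\text{at least one gene participates}\})=0$: if only finitely many genes participate, exchangeability within the rows moves them to arbitrarily high indices and produces infinitely many disjoint events of equal mass inside a finite-mass set such as $\{X_1=X_2=1\}$ or $\{X_1=1,\,Y_{1,j_1}=Y_{1,j_2}=1\}$; if infinitely many genes of species $1$ participate, then for each $N$ the event $\{\text{participating species}=\{1,2\}\}\cap\{\sum_{j\leq N}Y_{1j}\geq 2\}$ has, by swapping species $2$ with species $m$ for $m\geq 3$, infinitely many disjoint copies of equal mass inside $\{X_1=1,\sum_{j\leq N}Y_{1j}\geq 2\}$, which has finite mass by \eqref{eq:nu_conditions}; so that event is $\nu$-null, and letting $N\to\infty$ disposes of this sub-case too. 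Consequently, for $k=2$,
\[
\nu\bigl(A\cap\{\mathbf Z_b=\mathbf 0\}\bigr)=\1_{\{\mathbf c=\mathbf 0\}}\;\nu\bigl(\{X_1=X_2=1,\ X_l=0\ \forall l\geq 3,\ Y_{1j}=Y_{2j}=0\ \forall j\}\bigr),
\]
and the right-hand mass is a finite non-negative number, independent of $\mathbf g$ and, by exchangeability over rows, of the chosen pair; call it $a_s$.

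The case $k=1$ reduces to the classical characterisation of $\Lambda$-coalescents applied to a single species. Here $S=\{I\}$, no species merger occurs, and $A\cap\{\mathbf Z_b=\mathbf 0\}=\{X_I=1,\ X_l=0\ \forall l\neq I,\ Y_{Ij}=c_{Ij}\ \forall j\leq g_I\}$. By exchangeability over rows, the restriction $\nu_I:=\nu(\,\cdot\cap\{X_I=1,\ X_l=0\ \forall l\neq I\})$ does not depend on $I$; viewed as a measure in the single row $(Y_{Ij})_j$ it is exchangeable, it is supported on $\{\sum_j Y_{Ij}\geq 2\}$ (configurations with $\sum_j Y_{Ij}\leq 1$ give $C(\rho,\cdot)=\rho$, hence are $\nu$-null since $\mu_\rho(\{\rho\})=0$), and $\nu_I(\{\sum_{j\leq n}Y_{Ij}\geq 2\})<\infty$ by \eqref{eq:nu_conditions}. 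This is exactly the data of a $\Lambda$-coalescent, so by the classical characterisation \cite{bertoinRandom2006} (equivalently, by the same de~Finetti-and-limit argument as in Lemma \ref{lem:formulaZneq0}, which is needed because $\nu_I$ is only $\sigma$-finite) there are a unique $a_g\geq 0$ and a unique measure $\nu_g$ on $(0,1]$ with $\int_{(0,1]}q^2\,\nu_g(dq)<\infty$ such that
\[
\nu_I=a_g\sum_{1\leq j<j'}{\tt K}^g_{I;j,j'}+\int_{(0,1]}\nu_g(dq)\,P^g_{I,q}.
\]
Evaluating this on $\{Y_{Ij}=c_{Ij}\ \forall j\leq g_I\}$, the atom ${\tt K}^g_{I;j,j'}$ contributes exactly when $l_I=2$ and $\{j,j'\}$ is the pair of $1$'s of $\mathbf c_I$ (one such atom), while $P^g_{I,q}$ contributes $q^{l_I}(1-q)^{g_I-l_I}$; this gives \eqref{eq:formulaZ=0} in the case $k=1$. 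Uniqueness of $a_s$ follows by taking $\mathbf g=(1,1),\,\mathbf s=(1,1),\,\mathbf c=\mathbf 0$, and uniqueness of $(a_g,\nu_g)$ is that of the $\Lambda$-coalescent characterisation.

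I expect the main obstacle to be the $k=2$ claim that a finite species merger admits no simultaneous gene merger — this is precisely the mechanism absent from the one-level theory, and its ``infinitely many merging genes'' sub-case needs the slightly delicate truncation at the first $N$ genes followed by $N\to\infty$. A secondary technical point is that in the $k=1$ case the $\Lambda$-characterisation (or de~Finetti) has to be applied to a measure that is only $\sigma$-finite, which is handled exactly as in Lemma \ref{lem:formulaZneq0} by exhausting with the finite-mass events $\{\sum_{j\leq n}Y_{1j}\geq 2\}$.
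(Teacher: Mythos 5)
Your proof is correct and follows essentially the same route as the paper's: a case split on $k$, with the vanishing cases ($k\geq 3$, and $k=2$ with at least one participating gene) handled by producing infinitely many pairwise disjoint, equal-mass copies of the event inside a set of finite $\nu$-measure, the surviving $k=2$ event defining $a_s$, and the $k=1$ case reduced to the classical characterisation of exchangeable $\sigma$-finite measures on $\{0,1\}^{\N}$. Your treatment of the $k=2$ sub-cases (in particular $l=1$ and the infinitely-many-participating-genes scenario, handled by truncation) is somewhat more exhaustive than the paper's two representative sub-cases, but the mechanism is identical.
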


\begin{proof}
  The measure $\nu(\mathbf{X}\in \cdot)$ is an exchangeable measure on $\{0,1\}^{\N}$ such that, because of \eqref{eq:nu_conditions}, $\nu (X_1=X_2=X_3=1) < \infty$.
  Note that exchangeability implies that for any $n,i\geq 3$,
  \begin{equation} \label{eq:astuce}
  \nu(\{X_1=X_2=X_3=1\}\cap\{\mathbf{Z}_n = \mathbf{0}\}) =  \nu(\{X_1=X_2=X_i=1\}\cap\{\mathbf{Z}_{i} = \mathbf{0}\}),
  \end{equation}
  But the events $(\{X_1=X_2=X_i=1\}\cap\{\mathbf{Z}_{i} = \mathbf{0}\}, \, i\geq 3)$ are disjoint and all included in $\{X_1=X_2=1\}$, so that
  \[ \sum_{i\geq 3}\nu(\{X_1=X_2=X_i=1\}\cap\{\mathbf{Z}_{i} = \mathbf{0}\}) \leq \nu(\{X_1=X_2=1\}) < \infty.  \]
  From \eqref{eq:astuce} we deduce $\nu(\{X_1=X_2=X_3=1\}\cap\{\mathbf{Z}_n = \mathbf{0}\}) =0$.
  This implies immediately that for a finite array $(\mathbf{g},\mathbf{s}, \mathbf{c}) $ such that $k = \sum_i s_i > 2$, we have $\nu(A\cap\{\mathbf{Z}_b = \mathbf{0}\})=0$.
  \begin{itemize}
    \item In the case $k=2$ (suppose $s_1=s_2=1$), one must examine several cases.
    \begin{itemize}
      \item Suppose first $c_{1,1}=c_{1,2}=1$.
      This means that the first two gene blocks of the first species block coalesce while the first two species blocks coalesce.
      Then we note that for any $n,i\geq 2$,
      \begin{align*}
      &\nu(\{X_1=X_2=1, \, Y_{1,1}=Y_{1,2}=1\}\cap\{\mathbf{Z}_{n} = \mathbf{0}\}) \\
      &= \nu(\{X_1=X_i=1, \, Y_{1,1}=Y_{1,2}=1\}\cap\{\mathbf{Z}_{i} = \mathbf{0}\}).
      \end{align*}
      However,
      \[ \sum_{i\geq 2}\nu(\{X_1=X_i=1, \, Y_{1,1}=Y_{1,2}=1\}\cap\{\mathbf{Z}_{i} = \mathbf{0}\}) \leq \nu(\{Y_{1,1}=Y_{1,2}=1\}) <\infty, \]
      so that necessarily $\nu(\{X_1=X_2=1, \, Y_{1,1}=Y_{1,2}=1\}\cap\{\mathbf{Z}_{n} = \mathbf{0}\})=0$.
      So in the case $c_{1,1}=c_{1,2}=1$, we have $\nu(A\cap\{\mathbf{Z}_b = \mathbf{0}\})=0$.
      
      \item Now suppose $c_{1,1}=c_{2,1}=1$, and all the other $c_{ij}$ are zero.
      From our previous point, note that
      \[ \nu(\{X_1=X_2=1, \, Y_{1,1}=Y_{2,1}=1, \text{ and }\exists j\geq 2, Y_{1,j}=1 \}\cap\{\mathbf{Z}_{n} = \mathbf{0}\}) = 0, \]
      which implies that the events $ (\{X_1=X_2=1, \, Y_{1,j}=Y_{2,1}=1\}\cap{\{\mathbf{Z}_{n} = \mathbf{0}\}},\,{j\geq 1}) $ are $\nu$-a.e.\ disjoint.
      Therefore for any $n\geq 2$,
      \[ \sum_{j\geq 1}\nu(\{X_1=X_2=1, \, Y_{1,j}=Y_{2,1}=1\}\cap\{\mathbf{Z}_{n} = \mathbf{0}\}) \leq \nu(\{X_1=X_2=1\}) <\infty, \]
      So necessarily $\nu(\{X_1=X_2=1, \, Y_{1,j}=Y_{2,1}=1\}\cap\{\mathbf{Z}_{n} = \mathbf{0}\})=0$.
      This implies that in the case $c_{1,1}=c_{2,1}=1$, we have $\nu(A\cap\{\mathbf{Z}_b = \mathbf{0}\})=0$.
      \item The previous two points show that in the case $k=2$, the only way to have $\nu(A\cap\{\mathbf{Z}_b = \mathbf{0}\})>0$ is if $\mathbf{c}=\mathbf{0}$.
      In that case, define
      \begin{align*}\label{eq:truc}
      a_s &:= \nu(\{X_1=X_2=1\}\cap\{\mathbf{Z}_2 = \mathbf{0}\})\\
      &=\nu(\{X_1=X_2=1\}\cap\{\mathbf{Y}=\mathbf{0}\text{ and } \forall k \notin\{1,2\},\,X_k = 0  \}).
      \end{align*}
      Then by exchangeability, for all $i,j\in\N$ with $i\neq j$, we have
      \[ a_s = \nu(\{X_i=X_j=1\}\cap\{\mathbf{Y}=\mathbf{0}\text{ and } \forall k \notin\{i,j\},\,X_k = 0  \}), \]
      and in conclusion, for any array $(\mathbf{g},\mathbf{s}, \mathbf{c})$ such that $k=2$, we have
      \[ \nu(A\cap\{\mathbf{Z}_b = \mathbf{0}\})=\1_{\{\mathbf{c} = \mathbf{0}\}}a_s. \]
    \end{itemize}
    
    \item In the case $k=1$, suppose that $s_1=1$.
    On the event 
    \[ \{X_1=1, \,X_2=X_3=\cdots=X_b=0\}\cap\{\mathbf{Z}_{b}=\mathbf{0}\}, \]
    we have simply $\mathbf{Z}_{1}=\mathbf{0}$, and then the measure
    \[ \nu' := \nu\bigl(\{(Y_{1,j})_{j\in \N} \in \cdot\}\cap\{X_1=1,\,\mathbf{Z}_{1}=\mathbf{0}\}\bigr) \]
    is an exchangeable measure on $\{0,1\}^{\N}$ such that for all $n\in \N$, ${\nu'\left (\sum_{j=1}^{n}Y_{j}\geq 2 \right )<\infty}$.
    Therefore (see for instance Bertoin \cite{bertoinRandom2006}) there exist a unique constant $a_g \geq 0$ and $\nu_g$ a unique measure on $(0,1]$ satisfying \eqref{eq:condgenes} 
    such that $\nu'$ can be written
    \[ \nu'(Y_1 = y_1, Y_2=y_2, \ldots, Y_n=y_n) = a_g\1_{l=2}+\int_{(0,1]}\nu_{g}(dq)q^{l}(1-q)^{n-l}, \]
    for any vector $(y_1, y_2, \ldots, y_n)\in\{0,1\}^{n}\setminus\{\mathbf{0}\}$ such that $l:=\sum_i y_i \geq 2$.
  \end{itemize}
  
  Putting all the previous considerations together yields \eqref{eq:formulaZ=0}. 
\end{proof}

Now it remains to put together Lemma \ref{lem:formulaZneq0} and Lemma \ref{lem:formulaZ=0}.
Recall that we restricted the rate function $q$ to arrays in $\mathcal{C}$, i.e.\ satisfying \eqref{eq:hypothesesarray1} to \eqref{eq:hypothesesarray3}.
The reason for assuming \eqref{eq:hypothesesarray3} is that then we can always write $q_{b,k}(\mathbf{g},\mathbf{s},\mathbf{c})$ as in \eqref{eq:rates_from_nu}, that is
\begin{align*}
q_{b,k}(\mathbf{g},\mathbf{s},\mathbf{c})& = \nu\bigl (\forall 1\leq i\leq b,\,  X_i = s_i,\text{ and }\forall 1\leq j \leq g_i, Y_{ij} = c_{ij}\bigr ) \\ 
&+ \1_{\{\mathbf{c}=\mathbf{0}\}} \nu \left (\forall 1\leq i\leq b,\,  X_i = s_i,\text{ and }\sum_{i=1}^{b} \sum_{j=1}^{g_i} Y_{ij} = 1 \right ).
\end{align*}
Using the previous two lemmas to decompose the two lines on the events $\{\mathbf{Z}_{b}\neq \mathbf{0}\}$ and $\{\mathbf{Z}_{b}= \mathbf{0}\}$, we obtain the formula \eqref{eq:2species}, concluding the proof of Theorem \ref{thm:charac}.

\section{Poissonian construction} \label{sec:poisson}

The goal of the present section is to show how any simple nested exchangeable coalescent can be constructed from a Poisson point process.
Consider two real coefficients $a_s,a_g \geq 0$ and two measures: $\nu_s$ on $E=(0,1]\times \mathcal{M}_1([0,1])$ satisfying \eqref{eq:condspecies}, and $\nu_g$ on $(0,1]$, satisfying \eqref{eq:condgenes}.
Recall the measures $\mathtt{K}_s, \mathtt{K}_g, P^g_{x}$ and $P^s_{x,\mu}$ introduced in Section \ref{sec:statement}, and the measure $\nu(d\mathbf{Z})$ defined on the space $\widehat{E}$ of doubly indexed arrays of 0's and 1's $\mathbf{Z}=(\mathbf{X}, (\mathbf{Y}_i, \, i\geq 1))=(X_i, Y_{ij}, \, i,j\geq 1)$
\[ \nu :=  a_s{\tt K}_s + a_g {\tt K}_g+ \int_{(0,1]}\nu_g (dx)\, P^g_{x} +\int_{(0,1]\times \mathcal{M}_1 ([0,1])}\nu_s(dx,d\mu)\, P^s_{x,\mu} . \]
Note that $\nu$ characterizes the distribution of the SNEC through the relation \eqref{eq:rates_from_nu}.
The key idea of the construction is that $\nu$ necessarily satisfies \eqref{eq:nu_conditions}, which is easily shown using exchangeability and conditions \eqref{eq:condspecies} and \eqref{eq:condgenes}.
First, note that $\nu(\mathbf{Z}=\mathbf{0})=0$ is trivial from our definitions, and that a straightforward union bound yields
\begin{align*}
\nu\Big(\textstyle\sum_{i=1}^n & X_i \geq 2 \text{ or } \exists i \leq n, \textstyle\sum_{j=1}^{n}Y_{ij} \geq 2 \Big) \\
& \leq \sum_{1\leq i < i'\leq n} \nu(X_i=X_{i'}=1) + \sum_{i=1}^{n} \sum_{1\leq j< j'\leq n} \nu(X_i=Y_{ij}=Y_{ij'}=1) \\
&= \frac{n(n-1)}{2}\nu(X_1=X_2=1) + \frac{n^2(n-1)}{2}\nu(X_1=Y_{1,1}=Y_{1,2}=1),
\end{align*}
therefore we need only check that these two quantities are finite.
Now by definition, we have
\begin{align*}
\mathtt{K}_s(X_1=X_2=1)=1, &\quad \mathtt{K}_s(X_1=Y_{1,1}=Y_{1,2}=1)=0,\\
\mathtt{K}_g(X_1=X_2=1)=0, &\quad \mathtt{K}_g(X_1=Y_{1,1}=Y_{1,2}=1)=1,\\
P^{g}_{x}(X_1=X_2=1)=0, &\quad P^{g}_{x}(X_1=Y_{1,1}=Y_{1,2}=1)=x^2,\\
P^{s}_{x,\mu}(X_1=X_2=1)=x^2, &\quad P^{s}_{x,\mu}(X_1=Y_{1,1}=Y_{1,2}=1)=x\int_{[0,1]}\mu(dq)q^2.
\end{align*}
Integrating the last two lines with respect to $\nu_g$ and $\nu_s$ and summing, we see that \eqref{eq:condspecies} and \eqref{eq:condgenes} imply that both $\nu(X_1=X_2=1)$ and $\nu(X_1=Y_{1,1}=Y_{1,2}=1)$ are finite, proving \eqref{eq:nu_conditions}.

Now to start the construction of our process, consider an initial partition $\pi_0\in\mathcal{N}_\infty$.
Let $M$ be a Poisson point process on $(0,\infty) \times \widehat{E}$ with intensity $dt \otimes \nu(d\mathbf{Z})$.
We will construct on the same probability space the processes $\mathcal{R}^{n}= (\mathcal{R}^{n}(t), \, t\geq 0)$, for $n\in\mathbb{N}$ thanks to $M$.

Recall that for any $\mathbf{Z}=(\mathbf{X}, (\mathbf{Y}_i, \, i\geq 1))=(X_i, Y_{ij}, \, i,j\geq 1)$ and any nested partition $\pi \in\mathcal{N}_n$, we denote by $C(\pi, \mathbf{Z})$ the nested partition of $\mathcal{N}_n$ obtained from $\pi$ by merging exactly the blocks that \emph{participate in the coalescence} where 
\begin{itemize}
  \item The $i$-th block of $\pi^s$ participates iff $X_i=1$;
  \item The $j$-th block in $\pi^g$ of the $i$-th block of $\pi^s$ participates iff $X_i=1$ and $Y_{ij}=1$.   
\end{itemize}
Fix $n\in\N$, and let $M_n$ denote the subset of $M$ consisting of points $(t, \mathbf{Z})$ such that $\sum_{i=1}^{n}X_i\geq 2$ or $\exists i \leq n, \, X_i\sum_{j=1}^{n}Y_{ij}\geq 2$.
Because of \eqref{eq:nu_conditions}, there are only a finite number of such points with $t$ in a compact set of $[0,+\infty)$.
Therefore one can label the atoms of the set $M_n:=\{ ( t_k, \mathbf{Z}^{ (k) } ), k\in\N \}$ in increasing order, i.e.\ such that $0 \leq t_1\leq t_2 \ldots$

We set $\mathcal{R}^{n} (t)= (\pi_0)_{|n} $ for $t\in[0,t_1)$.
Then define recursively
\[
\mathcal{R}^{n} (t)  = C(  \mathcal{R}^{n} (t_i-),  \mathbf{Z}^{(i)}  ),  \quad \text{for every } t\in[t_i,t_{i+1}). 
\]
These processes are consistent in $n$ as we show in the following result.

\begin{proposition}
  \label{prop:Poisson}
  For every $t\geq0$, the sequence of random bivariate partitions $(\mathcal{R}^{n}(t), n\in\mathbb{N} )$ is consistent. 
  If we denote by $\mathcal{R}(t)$ the unique partition of $\mathcal{N}_\infty$ such that $\mathcal{R}_{ \vert n }(t) =  \mathcal{R}^{n}(t)$ for every $n\in\mathbb{N}$, then the process $\mathcal{R}=( \mathcal{R}(t), t\geq0 )$ is a SNEC started from $\pi_0$, with rates given as in Theorem \ref{thm:charac}.
\end{proposition}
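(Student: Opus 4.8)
The plan is to prove this in three stages: establish consistency of the family $(\mathcal{R}^n)_{n\in\N}$, identify the resulting $\mathcal{N}_\infty$-valued process as a SNEC via the projective criterion of Lemma \ref{lem:projMarkov}, and finally read off its jump rates from the explicit form of $\nu$.

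First I would prove consistency, which is the technical heart of the argument. Three facts need to be checked. (i)~$M_m\subseteq M_n$ as point sets whenever $m\le n$: any atom $(t,\mathbf{Z})$ meeting the defining condition of $M_m$ also meets that of $M_n$. (ii)~The coagulation map commutes with restriction, $C(\pi,\mathbf{Z})_{|m}=C(\pi_{|m},\mathbf{Z})$ for every $\pi\in\mathcal{N}_n$ and every array $\mathbf{Z}$; this holds because blocks are listed in increasing order of their least element, so restriction to $[m]$ does not reorder those species (resp.\ gene) blocks that meet $[m]$, and merging the ``participating'' blocks then commutes with intersecting with $[m]$. (iii)~For an atom $(t,\mathbf{Z})\in M_n\setminus M_m$, the failure of the $M_m$-condition forces at most one species block of $\pi_{|m}$, and at most one gene block inside it, to participate, so that $C(\pi,\mathbf{Z})_{|m}=\pi_{|m}$: such atoms leave the $[m]$-restriction unchanged. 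Combining these and inducting over the locally finitely many atoms of $M_m$ gives $\mathcal{R}^n(t)_{|m}=\mathcal{R}^m(t)$ for all $t$. By the correspondence between consistent sequences and elements of $\mathcal{N}_\infty$ recalled in Section \ref{sec:statement}, the process $\mathcal{R}$ is then well defined with $\mathcal{R}_{|n}=\mathcal{R}^n$; it is c\`adl\`ag since each $\mathcal{R}^n$ is piecewise constant and c\`adl\`ag and the topology of $\mathcal{N}_\infty$ is generated by the restriction maps, and it starts from $\pi_0$ by construction.

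Next I would show $\mathcal{R}$ is a SNEC using Lemma \ref{lem:projMarkov}. The Markov property of $\mathcal{R}$ follows from the construction: $\mathcal{R}(t)$ is a measurable function of $\pi_0$ and $M\cap((0,t]\times\widehat{E})$, while $\mathcal{R}(t+s)$ is obtained from $\mathcal{R}(t)$ by running the same recursions on $M\cap((t,t+s]\times\widehat{E})$, which is independent of the former. For each $n$, $M_n$ is a Poisson point process whose intensity is finite on $[0,T]\times\widehat{E}$ for every $T$ by \eqref{eq:nu_conditions}, so $\mathcal{R}^n$ is a non-exploding continuous-time Markov chain on $\mathcal{N}_n$ whose jump rate from $\pi$ to any $\pi'\ne\pi$ equals $\nu(\{\mathbf{Z}:C(\pi,\mathbf{Z})=\pi'\})$ (note that $C(\pi,\mathbf{Z})\ne\pi$ already implies that $(t,\mathbf{Z})$ lies in the defining set of $M_n$). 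I would then check the analogs of \textit{i)}--\textit{iii)} of Definition \ref{snec}: nestedness holds because $\nu$ charges only arrays with $Y_{ij}=1\Rightarrow X_i=1$, so $C$ preserves $\mathcal{N}_n$; simplicity holds because $C(\pi,\mathbf{Z})$ differs from $\pi$ only by merging one set of species blocks into one block and one set of gene blocks inside them into one block; and exchangeability of the rates under permutations of $[n]$ follows from the formula above together with the hierarchical exchangeability of $\nu$, a permutation of $[n]$ acting on the pair $(\pi,\pi')$ only by permuting the rows of the driving array and the entries within each row. Lemma \ref{lem:projMarkov} then delivers that $\mathcal{R}$ is a SNEC.

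Finally I would identify the rates with those of Theorem \ref{thm:charac}. Describing $\pi$ by a vector $\mathbf{g}$ and $\pi'$ as the state obtained from $\pi$ by merging species and genes according to $(\mathbf{s},\mathbf{c})$ --- and recalling that a lone ``participating'' gene block produces no gene merger, which is exactly why the indicator term arises --- the rate $\nu(\{\mathbf{Z}:C(\pi,\mathbf{Z})=\pi'\})$ is precisely the right-hand side of \eqref{eq:rates_from_nu}. Substituting the definitions of $\mathtt{K}_s$, $\mathtt{K}_g$, $P^g_x$ and $P^s_{x,\mu}$ and performing the elementary integrations --- just as was done for $\nu(X_1=X_2=1)$ and $\nu(X_1=Y_{1,1}=Y_{1,2}=1)$ earlier in this section --- then yields formula \eqref{eq:2species}, which completes the proof. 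The hard part will be the consistency step, and in particular point (iii) above (that atoms of $M_n$ lying outside $M_m$ do not disturb the $[m]$-restriction); everything else is routine Poisson-point-process bookkeeping and a direct appeal to Lemma \ref{lem:projMarkov}.
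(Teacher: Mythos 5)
Your proposal is correct and follows essentially the same route as the paper: consistency is obtained by checking that coagulation commutes with restriction and that atoms of $M_n$ failing the $M_m$-condition leave the $[m]$-restriction unchanged, after which Lemma \ref{lem:projMarkov} and the hierarchical exchangeability of $\nu$ identify $\mathcal{R}$ as a SNEC with the rates of Theorem \ref{thm:charac} via \eqref{eq:rates_from_nu}. The only cosmetic remark is that the induction in the consistency step is more naturally run over the atoms of $M_n$ (the larger set) rather than of $M_m$, but this does not affect the argument.
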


The proof uses similar arguments as in the proof of consistency of exchangeable coalescents given in Proposition 4.5 of \cite{bertoinRandom2006}.

\begin{proof}
  The key idea (basically (4.4) in \cite{bertoinRandom2006}) is that by definition, the coagulation operator satisfies
  \begin{equation}\label{BFragCoagEq4.4}
  \text{Coag}_2(\pi, \tilde\pi)_{|n} = \text{Coag}_2(\pi_{|n}, \tilde\pi) = \text{Coag}_2(\pi_{|n}, \tilde\pi_{|n})
  \end{equation}
  for any $\pi, \tilde{ \pi }$ and $n$ for which this is well defined.
  
  Recall that we defined $M_n$ as the subset of $M$ consisting of points $(t, \mathbf{Z})$ such that $\sum_{i=1}^{n}X_i\geq 2$ or $\exists i \leq n, \, X_i\sum_{j=1}^{n}Y_{ij}\geq 2$.
  Fix $n\geq2$ and write $( t_1, \mathbf{Z}^{(1)} )$ for the first atom of $M_n$ on $(0,\infty) \times \widehat{E}$.
  Plainly, $\mathcal{R}^{ n-1 }(t) = \mathcal{R}^{n}_{ | n-1}(t) = (\pi_{0})_{|n-1}$ for every $t\in[0,t_1)$.
  
  Consider first the case when $\sum_{i=1}^{n-1}X^{(1)}_i\geq 2$ or $\exists i \leq n-1, \, X^{(1)}_i\sum_{j=1}^{n-1}Y^{(1)}_{ij}\geq 2$.
  Then $( t_1, \mathbf{Z}^{(1)} )$ is also the first atom of $M_{n-1}$ and by definition and using \eqref{BFragCoagEq4.4}, $\mathcal{R}^{ n-1 }(t_1)=\mathcal{R}^{n}_{ |n-1 }(t)$.
  
  Now suppose $\sum_{i=1}^{n-1}X^{(1)}_i\leq 1$ and $\forall i \leq n-1, \, X^{(1)}_i\sum_{j=1}^{n-1}Y^{(1)}_{ij}\leq 1$.
  This implies that at time $t_1$, there is no species (resp.\ genes) coalescence between the $n-1$ first species (resp.\ genes) of $\mathcal{R}^{n}(t_1 -)$.
  Therefore the coalescence event in $\mathcal{R}^{n}$ at time $t_1$ leaves the first $n-1$ blocks of $\mathcal{R}^{n}(t_1 -)^{s}$ or $\mathcal{R}^{n}(t_1 -)^{g}$ unchanged, though there may be a coalescence involving the $n$-th block (in that case, necessarily a singleton $\{n\}$) and one of the $n-1$ first blocks.
  So finally $\mathcal{R}^{n}(t_1)_{|n-1}= \mathcal{R}^{n}(t_1 -)_{|n-1}=\mathcal{R}^{n-1}(t_1 )$.
  
  In both cases we have $\mathcal{R}^{n}(t_1)_{|n-1}=\mathcal{R}^{n-1}(t_1 )$, and by an obvious induction this is true for any further jump of the process $\mathcal{R}^{n}$, so that for all $t\geq 0$,
  \[ \mathcal{R}^{n}(t)_{|n-1}=\mathcal{R}^{n-1}(t). \]
  This shows the existence of $\mathcal{R}$ such that for all $n$, $\mathcal{R}_{|n}=\mathcal{R}^{n}$.
  
  From this Poissonian construction $\mathcal{R}^{n}$ is a Markov process, and by definition the arrays $\mathbf{Z}^{(i)}_{|[n]^2}$ are hierarchically exchangeable, which implies that $\mathcal{R}^{n}$ is an exchangeable process.
  Clearly by construction $\mathcal{R}^n(t)$ is nested for all $t$, and the only jumps of the process $\mathcal{R}^n$ are coalescence events.
  According to Lemma~\ref{lem:projMarkov}, the process $\mathcal{R}$ is a SNEC process.
  Because the arrays $\mathbf{Z}$, where $(t,\mathbf{Z})\in M$, are the same arrays that appear in the proof of Theorem \ref{thm:charac}, it is clear that the jump rates of $\mathcal{R}^{n}$ are those given in Theorem~\ref{thm:charac}.
\end{proof}

\section{Marginal coalescents -- Coming down from infinity}\label{sec:CDI}

Consider a SNEC process $\mathcal{R} =(\mathcal{R}^s, \mathcal{R}^g)$, with rates given as in Theorem \ref{thm:charac} by two coefficients $a_s, a_g \geq 0$ and two measures, $\nu_s$ on $E=(0,1]\times\mathcal M_1([0,1])$ and $\nu_g$ on $(0,1]$ satisfying \eqref{eq:condspecies} and \eqref{eq:condgenes}.

It is obvious from Proposition \ref{prop:Poisson}  that $(\mathcal{R}^s(t),\; t\geq 0)$ is a simple coalescent process, with Kingman coefficient $a_s$ and coagulation measure $\widehat{\nu}_s$ satisfying \eqref{eqn:condLambda} which is the push-forward of $\nu_s(dp,d\mu)$ by the application $(p,\mu) \mapsto p$.
Let us call this univariate coalescent the \emph{(marginal) species coalescent} of the SNEC process $\mathcal{R}$.

Now, notice that under an initial condition with a unique species block (i.e., $\mathcal{R}^s$ is constant to the coarsest partition  $\mathbf{1}_\infty$), the process $(\mathcal{R}^g(t),\; t\geq 0)$ also behaves as a simple coalescent process, with Kingman coefficient $a_g$ and coagulation measure $\widehat{\nu}_g$ defined by
\[ \forall B \text{ Borel set of }(0,1], \quad \widehat{\nu}_g(B) := \nu_g(B) + \int_{(0,1]\times \mathcal{M}_1([0,1])}\nu_s(dp,d\mu)p\,\mu(B). \]
We call the simple coalescent thus defined the \emph{(marginal) gene coalescent} of the SNEC process $\mathcal{R}$.

Equivalently, in terms of $\Lambda$-coalescents, the marginal species coalescent is a $\Lambda_s$-coalescent with $\Lambda_s$ defined by
\begin{equation}\label{eq:marg_gene}
\forall B \text{ Borel set of }[0,1], \quad \Lambda_s(B) = a_s \delta_0(B) + \int_{B\times \mathcal{M}_1([0,1])}\nu_s(dp,d\mu) p^2, 
\end{equation}
and the marginal gene coalescent is a $\Lambda_g$-coalescent with $\Lambda_g$ defined for all $B$ Borel set of $[0,1]$ by
\begin{equation}\label{eq:marg_species}
\Lambda_g(B) = a_g \delta_0(B) + \int_{B}\nu_g(dq) q^2 + \int_{(0,1]\times \mathcal{M}_1([0,1])}\nu_s(dp,d\mu)p\int_{B}\mu(dq) q^2.
\end{equation}

These two marginal processes allow us to express properties of the initial bivariate SNEC process.
Consider an initial state $\rho_0\in\mathcal{N}_\infty$ with infinitely many species blocks, each containing infinitely many gene blocks.
In a way analogous to the one-dimensional case, recalling that $\abs{\mathcal{R}^g(t)} \geq \abs{\mathcal{R}^s(t)}$ for all $t\geq 0$, we will say that a SNEC \emph{comes down from infinity} (CDI) if for all $t>0$
\[
\abs{\mathcal{R}^{g}(t)} < \infty  \qquad \P_{\rho_0}\text{-a.s.}
\] 
In the univariate case, characterizing which coalescent processes come down from infinity has been solved \cite{schweinsbergNecessary2000} for $\Lambda$-coalescents, with the following necessary and sufficient condition for coming down from infinity:
\begin{equation*} \label{eq:conditionCDI}
\sum_{n\geq 2}\left( \sum_{k=2}^{n}(k-1)\binom{n}{k} \int_{[0,1]}\Lambda(dp) p^{k-2}(1-p)^{n-k} \right)^{-1} < \infty. 
\end{equation*}
Note that the previous condition is true as soon as $\Lambda$ has an atom at $0$ ($\Lambda(\{0\})$ is the Kingman coefficient of the process). An equivalent criterion (see \cite{BLG06}, and \cite{BBL2014} for a probabilistic proof) is the integrability of $1/\psi$ near $+\infty$, where
\begin{equation}\label{eq:CDIcriterion}
\psi (q) :=\int_{[0,1]}\left(e^{-qx}-1+qx\right)x^{-2}\,\Lambda(dx).
\end{equation}
We will now see that in the case of simple nested coalescents, we can give a general characterization of the different CDI properties of a SNEC process, depending only on the properties of the marginal species and marginal gene coalescents.

First notice that if the marginal gene coalescent does not CDI, then any species block with infinitely many gene blocks at some time $t$ clearly keeps infinitely many gene blocks for any $t'\geq t$.
Also in any case the process $\mathcal{R}^s$ has the distribution of the marginal species coalescent, so determining whether the number of species comes down from infinity is trivial.

\begin{proposition} \label{prop:CDI}
  We assume here that $\widehat{\nu}_s(\{1\})=\widehat{\nu}_g(\{1\})=0$ and that the marginal gene coalescent comes down from infinity (CDI). Then we have the following three cases.
  \begin{enumerate}[i)]
    \item If the marginal species coalescent CDI as well, then $ \mathcal{R} $ CDI.
    \item If the marginal species coalescent does not CDI but $\displaystyle\int_{[0,1]} \widehat{\nu}_s(dx)\,x=\infty$, then for any initial condition with infinitely many species blocks and for each time $t>0$, the number of gene blocks in each species block of $\mathcal{R}(t)$ is infinite a.s.
    \item If the marginal species coalescent does not CDI and $\displaystyle\int_{[0,1]} \widehat{\nu}_s(dx)\,x<\infty$, then for any initial condition and for each time $t>0$, the number of gene blocks in each species block of $\mathcal{R}(t)$ is finite a.s.
  \end{enumerate}
\end{proposition}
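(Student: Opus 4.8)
\section*{Proof proposal for Proposition~\ref{prop:CDI}}

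The plan is to reduce everything to the two one--dimensional marginal coalescents and to the Poissonian construction of Proposition~\ref{prop:Poisson}. Write $\Lambda_s$ and $\Lambda_g$ for the measures of \eqref{eq:marg_gene} and \eqref{eq:marg_species}, and decompose $\Lambda_g = \Lambda_g^0 + \Lambda_g^{\mathrm{ext}}$ where $\Lambda_g^0(dq):=a_g\,\delta_0(dq)+q^2\,\nu_g(dq)$ is the ``intrinsic, within a single species'' part and $\Lambda_g^{\mathrm{ext}}(dq):=q^2\,\widehat\nu_g^{\mathrm{ext}}(dq)$ with $\widehat\nu_g^{\mathrm{ext}}(dq):=\int_E\nu_s(dp,d\mu)\,p\,\mu(dq)$, so that $\widehat\nu_g=\nu_g+\widehat\nu_g^{\mathrm{ext}}$. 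Two elementary facts will be used repeatedly. First, reading off the Poisson construction, in \emph{any} configuration a given species block takes part in a $\nu_s$--event of type $(p,\mu)$ with intensity $p\,\nu_s(dp,d\mu)$, hence in $\nu_s$--events at total rate $\int_{(0,1]}x\,\widehat\nu_s(dx)$ (finite in case iii, infinite in case ii); moreover, whenever the marginal species coalescent does not come down from infinity one necessarily has $a_s=0$, since an atom at $0$ of $\Lambda_s$ forces CDI. Second, I record the comparison: \emph{if $\Lambda=\Lambda'+\Lambda_1$ with $\int_{(0,1]}q^{-1}\Lambda_1(dq)<\infty$ and $\Lambda$ is CDI, then $\Lambda'$ is CDI.} Indeed, with $\psi$ as in \eqref{eq:CDIcriterion} one has $\psi_{\Lambda'}=\psi_\Lambda-\psi_{\Lambda_1}$, and $\psi_{\Lambda_1}(u)=\int(e^{-uq}-1+uq)q^{-2}\Lambda_1(dq)\le u\int q^{-1}\Lambda_1(dq)$ grows at most linearly, whereas $\psi_\Lambda(u)/u\to\infty$ because $\int^{\infty}du/\psi_\Lambda(u)<\infty$; hence $\psi_{\Lambda'}(u)\ge\tfrac12\psi_\Lambda(u)$ for $u$ large, and $\int^{\infty}du/\psi_{\Lambda'}<\infty$.

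\emph{Case i).} Since $|\mathcal{R}^g(t)|$ is nonincreasing in $t$, it suffices to fix $t>0$ and show $|\mathcal{R}^g(t)|<\infty$ a.s. Because the species coalescent CDI, the Markov property at time $t/2$ lets us assume there are $N:=\abs{\mathcal{R}^s(t/2)}<\infty$ species blocks initially, and from $N$ blocks there are at most $N-1$ species coalescence events ever. Between (and after) them a given species block $C$ never gains genes from other species, and its gene blocks merge at rate at least that of a $\Lambda'$--coalescent, where $\Lambda':=\Lambda_g^0+q^2\int_E\nu_s(dp,d\mu)\,p(1-p)^{N-1}\mu(dq)$: a ``solo'' $\nu_s$--event for $C$ (one in which none of the at most $N$ present species blocks other than $C$ participates) occurs at rate $\int p(1-p)^{m-1}\nu_s\ge\int p(1-p)^{N-1}\nu_s$, and once a single species block remains every $\nu_s$--participation of $C$ is solo. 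Now $\Lambda_g-\Lambda'=q^2\int_E\nu_s(dp,d\mu)\,p\,(1-(1-p)^{N-1})\,\mu(dq)\le(N-1)\,q^2\int_E\nu_s(dp,d\mu)\,p^2\,\mu(dq)$, whence $\int q^{-1}(\Lambda_g-\Lambda')(dq)\le(N-1)\int_E\nu_s(dp,d\mu)\,p^2<\infty$ by \eqref{eq:condspecies1}; the comparison above gives that $\Lambda'$ is CDI. Consequently, after the (a.s.\ strictly before $t$) last species coalescence involving it, each of the finitely many species blocks present at time $t$ has finitely many gene blocks at time $t$, so $\abs{\mathcal{R}^g(t)}<\infty$.

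\emph{Case iii).} Here $a_s=0$ and $\int_{(0,1]}x\,\widehat\nu_s(dx)<\infty$, so $\int q^{-1}\Lambda_g^{\mathrm{ext}}(dq)=\int_E\nu_s(dp,d\mu)\,p\int q\,\mu(dq)\le\int_{(0,1]}x\,\widehat\nu_s(dx)<\infty$, and the comparison applied to $\Lambda_g=\Lambda_g^0+\Lambda_g^{\mathrm{ext}}$ yields that $\Lambda_g^0$ is itself CDI. Fix $t>0$ and $m\in\N$. The species block containing $m$ takes part in $\nu_s$--events at the finite rate $\int_{(0,1]}x\,\widehat\nu_s(dx)$, and in no Kingman species merger (as $a_s=0$), hence in only finitely many coalescence events on $[0,t]$; let $\sigma_m<t$ be the time of the last such event (set $\sigma_m=0$ if there is none). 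After $\sigma_m$ this species block is frozen and its gene blocks evolve exactly as a $\Lambda_g^0$--coalescent, which CDI, so it has finitely many gene blocks at time $t$. Since every species block of $\mathcal{R}^s(t)$ contains some $m\in\N$ and $\N$ is countable, a.s.\ all species blocks at time $t$ have finitely many gene blocks.

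\emph{Case ii), and the main obstacle.} Here $a_s=0$ and $\int_{(0,1]}x\,\widehat\nu_s(dx)=\infty$; consequently $\widehat\nu_s$ has infinite total mass, the species coalescent keeps infinitely many blocks at all positive times, and \emph{every} $\nu_s$--event merges infinitely many species blocks into one. Fix $t>0$, $m\in\N$, and track the species block $S(u)$ containing $m$ together with its gene partition $\Gamma(u)$. As $a_s=0$, between the times at which $S$ participates in a $\nu_s$--event the partition $\Gamma$ merges exactly as a $\Lambda_g^0$--coalescent; but $S$ participates in $\nu_s$--events at the infinite rate $\int_{(0,1]}x\,\widehat\nu_s(dx)$, so these times form a set dense in $(0,+\infty)$, and at each such time $S$ merges with infinitely many other species blocks, of which a positive fraction (those having at least one ``non--selected'' gene block, which by $\widehat\nu_g(\{1\})=0$ has probability bounded away from $0$) each contribute at least one of their gene blocks to $\Gamma$ — so $\Gamma$ instantaneously gains infinitely many new gene blocks. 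Against a $\Lambda_g^0$--coalescent, whose block--count started from infinity still tends to infinity as the elapsed time tends to $0$, such dense immigration of infinitely many blocks forces $\abs{\Gamma(u)}=\infty$ for all $u>0$; in particular the species block of $m$ at time $t$, and hence (every species block contains some $m$) every species block of $\mathcal{R}^s(t)$, has infinitely many gene blocks.

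The heart of the proposition is precisely case ii): even though $\Lambda_g$ comes down from infinity, within a single species lineage the relentless immigration of gene blocks carried in by the infinitely many species absorbed at each $\nu_s$--merger outpaces the gene coalescences. Making the two ingredients rigorous — that a positive proportion of the infinitely many absorbed species blocks each leave a non--selected gene block in $\Gamma$ (via exchangeability and the independence built into the Poissonian construction, together with a Borel--Cantelli argument, while controlling through \eqref{eq:condspecies2} the cumulative selection pressure on the genes of any fixed species lineage), and that a coalescent started from infinitely many blocks has more than any prescribed number of blocks at all sufficiently small times — is where the real work lies; cases i) and iii) are comparatively routine once the $\psi$--comparison above is in hand.
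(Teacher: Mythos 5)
Your cases \emph{i)} and \emph{iii)} are correct, and in fact more careful than the paper's own treatment: the $\psi$-comparison lemma (subtracting a component $\Lambda_1$ with $\int q^{-1}\Lambda_1(dq)<\infty$ preserves CDI, because $\psi_{\Lambda_1}$ grows at most linearly while $\psi_\Lambda(u)/u\to\infty$) is sound, and it correctly addresses the point that between species-level events the within-species gene dynamics is \emph{not} the full marginal gene coalescent $\Lambda_g$ but the reduced coalescent $\Lambda'_N$ (case i) or $\Lambda_g^0$ (case iii); the paper glosses over this and simply invokes the marginal gene coalescent, so your lemma is a genuine patch.

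Case \emph{ii)}, however, is not a proof, and you say so yourself. The step ``such dense immigration of infinitely many blocks forces $\abs{\Gamma(u)}=\infty$ for all $u>0$'' is exactly the heart of the matter and is left as an assertion. The difficulty is that the participation times of the tracked species form a dense set, so there is no \emph{last} immigration time before $u$ at which to restart the process, and no stopping time at which to apply the strong Markov property. The paper's device, which your sketch is missing, is a truncation: for $\epsilon>0$, consider only those $\nu_s$-events in which the tracked species participates together with a proportion at least $\epsilon$ of all species. These occur at the \emph{finite} rate $e(\epsilon)=\int_{[\epsilon,1]}\widehat\nu_s(dx)\,x$, which by the hypothesis $\int_{[0,1]}\widehat\nu_s(dx)\,x=\infty$ can be made arbitrarily large by choosing $\epsilon$ small. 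Hence the first such time $T$ after $t-\delta$ is a bona fide stopping time, exponentially distributed, and satisfies $\P(T>t)=e^{-\delta e(\epsilon)}\leq\eta$ for suitable $\delta,\epsilon$. At $T$ one shows $M(T)=\infty$ (infinitely many species merge in, and by $\widehat\nu_g(\{1\})=0$ and the law of large numbers infinitely many of them contribute a non-participating gene block), then applies the strong Markov property at $T$: the induced partition on the gene blocks present at time $T$ evolves as a marginal gene coalescent $\widetilde\Pi$ started from $\mathbf{0}_\infty$, independent of $T$, and $M(t)\geq\abs{\widetilde\Pi(t-T)}$ with $t-T\leq\delta$, so $\P(M(t)\leq A)\leq\P(T>t)+\P(\abs{\Pi(\delta)}\leq A)\leq 2\eta$ by right-continuity of the block count at $0+$. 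Note also that one then does not need to ``control the cumulative selection pressure via \eqref{eq:condspecies2}'' as you anticipate: a single well-chosen stopping time suffices, and all the subsequent coalescence (including further $\nu_s$-driven mergers) only helps, since $\widetilde\Pi$ already accounts for it as a lower bound.
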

As a consequence of this proposition, it is clear that $\mathcal{R}$ comes down from infinity if and only if both the marginal species coalescent and the marginal gene coalescent come down from infinity.

A simple example of a SNEC process coming down from infinity is the nested Kingman coalescent (`Kingman in Kingman'), given by its marginal rates $a_s,a_g>0$, defined so that each pair of species coalesces at rate $a_s$ independently of the others, and each pair of genes within the same species coalesces at rate $a_g$ independently of the rest.
Since the marginal coalescents are precisely two Kingman coalescents, they both come down from infinity.

Note that the Bolthausen-Sznitman coalescent \cite{BS1998} (denoted $U$-coalescent in \cite{pitmanCoalescents1999} because the measure $\Lambda$ is uniform on $[0,1]$) satisfies the conditions of the peculiar case \textit{ii)}.
So for a SNEC $\mathcal{R}$ defined by a Kingman gene coalescent evolving within a species $U$-coalescent, at each positive time the number of gene blocks within a species block is infinite (if the initial state $\rho_0$ has an infinite number of species blocks).

Case \textit{iii)} can easily be obtained by considering a ``slow'' species coalescent, such as a $\delta_x$-coalescent for $x\in(0,1)$, or any $\beta(a,b)$-coalescent with $a>1, b>0$ (that is a $\Lambda$-coalescent with $\Lambda(dx) = c_{a,b} x^{a-1}(1-x)^{b-1} dx$).

\begin{proof}
  \textit{\textbf{i)}} Suppose both marginal coalescents come down from infinity, and consider an initial state $\rho\in\mathcal{N}_\infty$ with infinitely many species blocks, each containing infinitely many gene blocks.
  
  Choose $t>0$.
  Since $\mathcal{R}^{s}$ comes down from infinity, we have $\P_{\rho}(\abs{\mathcal{R}^{s}(t/2)}<\infty)=1$, and necessarily, $\mathcal{R}^{s}$ stays constant on an interval $[t/2, T[$, where $T$ is its next jump time.
  Now on the interval $[t/2, \min(T,t)[$, within each of the $ \abs{\mathcal{R}^{s}(t/2)} $ species block, the gene blocks undergo independent coalescent processes which CDI, therefore there are finitely many gene blocks in each species at time $\min(T,t)$, which implies
  \[ 
  \P_{\rho}(\abs{\mathcal{R}^{g}(t)} < \infty) = 1.
  \]
  Let us say a few words before proving \textit{ii)} and \textit{iii)}. 
  Pick $t>0$ and focus on the species containing $1$ (the first species).
  To this aim, write $M(t)$ for the number of genes within the first species, at time $t$.
  By exchangeability, to show \textit{ii)} it is sufficient to show $ \P_{\rho}(M(t) = \infty)=1 $, for any initial condition $\rho$ with infinitely many species blocks, and to show \textit{iii)} it is sufficient to show $ \P_{\rho}(M(t) < \infty)=1 $, for any initial condition $\rho$.
  
  \textit{\textbf{ii)}} Suppose $\int_{[0,1]} \widehat{\nu}_s(dx)\,x=\infty$.
  First, note that since the species coalescent does not CDI and $\widehat{\nu}_s(\{1\})=0$, there are at all times $t\geq 0$ infinitely many species blocks (see for instance \cite[Proposition 23]{pitmanCoalescents1999}).
  Now let us fix $\delta \in(0,t]$ and $\epsilon \in (0,1]$, and investigate the random number of coalescence events in the time interval $[t-\delta,t]$ involving the first species and at least a proportion $\epsilon$ of all other species.
  More precisely, we consider the number of atoms $(s, \mathbf{Z})$ in the Poissonian construction such that $s\in [t-\delta,t]$, $X_1 = 1$ and $\lim_{n\to\infty}\sum_{i=1}^{n} X_i/n \geq \epsilon $.
  From the Poissonian construction, it is easy to see that this number is a Poisson random variable with mean
  \[ \delta \int_{[\epsilon,1]} \widehat{\nu}_s(dx)\,x. \]
  Pick any $A\in\N$ and $\eta>0$.
  We will show $\P_{\rho}(M(t)\leq A) < 2 \eta$, which is sufficient to conclude that $M(t)=\infty$ a.s.
  Note that we assumed that the marginal gene coalescent CDI, so for $\Pi=(\Pi(t), \,t\geq 0)$ a version of this univariate coalescent started from $\mathbf{0}_{\infty}$, we have $\P(\abs{\Pi(\delta)}<\infty)=1$ for all $\delta >0$. In addition, $\Pi$ is right-continuous, so $\abs{\Pi(\delta)} \uparrow\infty$ as $\delta\to 0$.
  Therefore, one can choose $\delta > 0$ small enough, and then $\epsilon >0$ such that
  \begin{equation}\label{eq:approxii}
  \P(\abs{\Pi(\delta)} \leq A) < \eta \quad \text{ and } \quad e(\epsilon) :=  \int_{[\epsilon,1]} \widehat{\nu}_s(dx)\,x \geq \frac{-\log (\eta)}{\delta}.
  \end{equation}
  Now consider the stopping time defined by
  \[\begin{aligned}
  T := \inf\{s \geq t-\delta, \; \text{the first species participates at time $s$ in a coalescence event }\\
  \text{involving at least a proportion $\epsilon$ of other species}\}.
  \end{aligned} \]
  By the Poisson construction, $T-(t-\delta)$ is an exponential random variable with parameter $e(\epsilon)$, so from \eqref{eq:approxii} we deduce
  \[ \P_{\rho}(T \geq t) \leq \eta. \]
  Now since $T$ is a coalescence time for the first species, we have $M(T)=\infty$ almost surely.
  Indeed, the assumption $\widehat{\nu}^{g}(\{1\})=0$ implies that not every gene participates in the coalescence.
  But since an infinite number of species participate in the coalescence, the law of large numbers implies that in the newly formed species, there is an infinite number of genes which do not coalesce at time $T$.
  Since $M(T)=\infty$, we can define a random injection $\sigma : \N \to \N$ mapping $k$ to the first element of the $k$-th gene of the first species at time $T$.
  We then define $\widetilde{\Pi}(u) := \sigma(\mathcal{R}^g(T+u))$, which has by the strong Markov property the distribution of a marginal gene coalescent started from $\mathbf{0}_{\infty}$, independent of $T$.
  Furthermore, by construction we have $M(T+u) \geq \abs{\widetilde{\Pi}(u)}$ a.s., so that finally
  \begin{align*}
  \P_{\rho}(M(t)\leq A) &\leq \P_{\rho}(T>t)+\P_{\rho}(t-\delta \leq T\leq t)\,\P_\rho(\abs{\widetilde{\Pi}(t-T)}\leq A \mid t-\delta \leq T\leq t)\\
  &\leq \P_{\rho}(T>t)+\P(\abs{\Pi(\delta)}\leq A)\\
  &\leq 2\eta.
  \end{align*}
  
  \textit{\textbf{iii)}} Now supposing $\int_{[0,1]} \widehat{\nu}_s(dx)\,x<\infty$, with the same argument as previously, the first species participates in coalescence events at some random times $0<T_1<T_2<\ldots$, distributed as a Poisson process with parameter $\int_{[0,1]} \widehat{\nu}_s(dx)\,x$, and all these events involve infinitely many species blocks (recall the marginal species coalescent does not CDI and so in particular has $a_s=0$).
  Let $T_0:=0$ by convention and for each $i$, we can define a random injection $\sigma_i : \N \to \N$ mapping $k$ to the first element of the $k$-th gene of the first species at time $T_i$.
  Now because the first species does not change during the intervals $[T_i, T_{i+1})$, the process $\widetilde{\Pi}_i$ defined by
  \[ \widetilde{\Pi}_i(u) := \sigma_i(\mathcal{R}^g(T_i+u)) \]
  is a marginal gene coalescent (and so CDI by assumption), which is independent of $T_i$, and there is the following equality between processes, for $u < T_{i+1}-T_i$,
  \[ M(T_i+u)=\widetilde{\Pi}_i(u). \]
  Finally, we have for any $t>0$, and any initial $\rho\in\mathcal{N}_\infty$,
  \begin{align*}
  \P_{\rho}(M(t)<\infty) &= \sum_{i\geq 0}\P_{\rho}(T_i < t < T_{i+1})\,\P_{\rho}(M(t)<\infty\mid T_i < t < T_{i+1})\\
  &= \sum_{i\geq 0}\P_{\rho}(T_i < t < T_{i+1})\,\P_{\rho}(\widetilde{\Pi}_i(t-T_i)<\infty \mid T_i < t < T_{i+1})\\
  &= \sum_{i\geq 0}\P_{\rho}(T_i < t < T_{i+1}) = 1,
  \end{align*}
  which concludes the proof.
\end{proof}

\paragraph{\bf Acknowledgments.} The four authors thank the {\em Center for Interdisciplinary Research in Biology} (Coll\`ege de France) for travel funding.
ABB and ASJ would like to thank CIMAT, A.C. and especially Víctor Rivero for support and for his comments on this project, which started during Airam's PhD thesis.
ABB is supported by CONACYT postdoctoral grant 234823, and ASJ by CONACYT grant CB-2014/243068.

%

\end{document}